\newenvironment{proof}{\par\noindent{\bf Proof \,}}{$\hfill \Box$\par\bigskip}
\newtheorem{thm}{Theorem}[section]
\newtheorem{lem}[thm]{Lemma}
\newtheorem{cor}[thm]{Corollary}
\newtheorem{df}[thm]{Definition}
\newtheorem{ex}[thm]{Example}
\begin{document}

\title{Factorizations of Matrices Over Projective-free Rings}

\author{H. Chen \thanks{Department of Mathematics, Hangzhou Normal University, Hangzhou,
310036, People's Republic of China, e-mail: huanyinchen@yahoo.cn},
H. Kose\thanks{Department of Mathematics, Ahi Evran University,
Kirsehir, Turkey, handankose@gmail.com}, Y. Kurtulmaz
\thanks{Department of Mathematics, Bilkent University, Ankara, Turkey, yosum@fen.bilkent.edu.tr}}

\maketitle

\begin{abstract} An element of a ring $R$ is called strongly $J^{\#}$-clean
provided that it can be written as the sum of an idempotent and an
element in $J^{\#}(R)$ that commute. We characterize, in this
article, the strongly $J^{\#}$-cleanness of matrices over
projective-free rings. These extend many known results on strongly
clean matrices over commutative local rings.

\vspace{2mm} \noindent {\bf 2010 Mathematics Subject Classification :} 15A13, 15B99, 16L99.\\
\noindent {\bf Key words}: strongly $J^{\#}$-matrix,
characteristic polynomial, projective-free ring.
\end{abstract}

\section{Introduction}

Let $R$ be a ring with an identity. We say that $x\in R$ is
strongly clean provided that there exists an idempotent $e\in R$
such that $x-e\in U(R)$ and $ex=xe$. A ring $R$ is strongly clean
in case every element in $R$ is strongly clean (cf. [9-10]). In
[2, Theorem 12], Borooah, Diesl, and Dorsey provide the following
characterization: Given a commutative local ring $R$ and a monic
polynomial $h\in R[t]$ of degree $n$, the following are
equivalent: $(1)$ $h$ has an $SRC$ factorization in $R[t]$; $(2)$
every $\varphi\in M_n(R)$ which satisfies $h$ is strongly clean.
It is demonstrated in [6, Example 3.1.7] that statement $(1)$ of
the above can not weakened from $SRC$ factorization to $SR$
factorization. The purpose of this paper is to investigate a
subclass of strongly clean rings which behave like such ones but
can be characterized by a kind of $SR$ factorizations, and so get
more explicit factorizations for many class of matrices over
projective-free rings.

Let $J(R)$ be the Jacobson radical of $R$. Set
$$J^{\#}(R)=\{ x\in R~\mid~\exists ~n\in {\Bbb N}~\mbox{such
that}~x^n\in J(R)\}.$$ For instance, let $R=M_2({\Bbb Z}_2)$. Then
$$J^{\#}(R)=\{ \left(
\begin{array}{cc}
0&0\\
0&0
\end{array}
\right), \left(
\begin{array}{cc}
0&1\\
0&0
\end{array}
\right), \left(
\begin{array}{cc}
0&0\\
1&0
\end{array}
\right) \},$$ while $J(R)=0$. Thus, $J^{\#}(R)$ and $J(R)$ are
distinct in general. We say that an element $a\in R$ is strongly
$J^{\#}$-clean provided that there exists an idempotent $e\in R$
such that $a-e\in J^{\#}(R)$ and $ea=ae$. If $R$ is a commutative
ring, then $a\in R$ is strongly $J^{\#}$-clean if and only if
$a\in R$ is strongly $J$-clean (cf. [3]). But they behave
different for matrices over commutative rings. A Jordan-Chevalley
decomposition of $n\times n$ matrix $A$ over an algebraically
closed field (e.g., the field of complex numbers), then $A$ is an
expression of it as a sum: $A=E+W$, where $E$ is semisimple, $W$
is nilpotent, and $E$ and $W$ commute. The Jordan-Chevalley
decomposition is extensively studied in Lie theory and operator
algebra. As a corollary, we will completely determine when an
$n\times n$ matrix over a filed is the sum of an idempotent matrix
and a nilpotent matrix that commute. Thus, the strongly
$J^{\#}$-clean factorizations of matrices over rings is also an
analog of that of Jordan-Chevalley decompositions for matrices
over fields.

We characterize, in this article, the strongly $J^{\#}$-cleanness
of matrices over projective-free rings. Here, a commutative ring
$R$ is projective-free provided that every finitely generated
projective $R$-module is free. For instances, every commutative
local ring, every commutative semi-local ring, every principal
ideal domain, every B$\acute{e}$zout domain (e.g., the ring of all
algebraic integers) and the ring $R[x]$ of all polynomials over a
principal domain $R$ are all projective-free. We will show that
strongly $J^{\#}$-clean matrices over projective-free rings are
completely determined by a kind of $``SC"$-factorizations of the
characteristic polynomials. These extend many known results on
strongly clean matrices to such new factorizations of matrices
over projective-free rings (cf. [1-2] and [5]).

Throughout, all rings with an identity and all modules are unitary
modules. Let $f(t)\in R[t]$. We say that $f(t)$ is a monic
polynomial of degree $n$ if $f(t)=t^n+a_{n-1}t^{n-1}+\cdots
+a_1t+a_0$ where $a_{n-1},\cdots ,a_1,a_0\in R$. We always use
$U(R)$ to denote the set of all units in a ring $R$. If
$\varphi\in M_n(R)$, we use $\chi (\varphi)$ to stand for the
characteristic polynomial $det( tI_n-\varphi)$.

\section{Full Matrices Over Projective-free Rings}

Let $A=\left(
\begin{array}{cc}
1&1\\
1&0
\end{array}
\right)\in M_2({\Bbb Z}_2)$. It is directly verified that $A\in
M_2({\Bbb Z}_2)$ is not strongly $J^{\#}$-clean, though $A$ is
strongly clean. It is hard to determine strongly cleanness even
for matrices over the integers, but completely different situation
is in the strongly $J^{\#}$-clean case. The aim of this section is
to characterize a single strongly $J^{\#}$-clean $n\times n$
matrix over projective-free rings. Let $M$ be a left $R$-module.
We denote the endomorphism ring of $M$ by $end(M)$.

\begin{lem} Let $M$ be a left $R$-module, and let $E=end(M)$, and let $\alpha\in E$. Then the following are equivalent:
\begin{enumerate}
\item [(1)]{\it $\alpha\in E$ is strongly $J^{\#}$-clean.}
\vspace{-.5mm}
\item [(2)]{\it There exists a left $R$-module decomposition $M=P\oplus Q$ where $P$ and $Q$ are $\alpha$-invariant, and
$\alpha|_P\in J^{\#}\big(end(P)\big)$ and $(1_M-\alpha)|_Q\in
J^{\#}\big(end(Q)\big)$.}
\end{enumerate}
\end{lem}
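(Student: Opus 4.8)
The plan is to exploit the standard dictionary between idempotents of $E=end(M)$ and internal direct sum decompositions of $M$, and to track how the condition $\alpha-e\in J^{\#}(E)$ restricts to the summands. Given a strongly $J^{\#}$-clean decomposition $\alpha=e+w$ with $e^2=e$, $w=\alpha-e\in J^{\#}(E)$ and $ew=we$, I would set $P=\ker e=(1_M-e)M$ and $Q=eM$, so that $M=P\oplus Q$. Since $e$ commutes with $\alpha$, the map $\alpha$ preserves both $P$ and $Q$; equivalently, $\alpha$ is block-diagonal for this decomposition. Writing $\pi_P=1_M-e$ and $\pi_Q=e$ for the two projection idempotents, I would record the elementary but crucial observation that any element commuting with $e$ is block-diagonal, i.e. $\pi_P x\pi_Q=\pi_Q x\pi_P=0$; in particular $w=\alpha-e$ splits as $w=\pi_P w\pi_P+\pi_Q w\pi_Q$.

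Evaluating on the summands, $e$ acts as $0$ on $P$ and as $1_Q$ on $Q$, so $w|_P=\alpha|_P$ while $w|_Q=(\alpha-1_M)|_Q=-(1_M-\alpha)|_Q$. The content of $(1)\Rightarrow(2)$ is then to pass from $w\in J^{\#}(E)$ to $\alpha|_P\in J^{\#}(end(P))$ and $(1_M-\alpha)|_Q\in J^{\#}(end(Q))$, using that $J^{\#}$ is closed under negation (since $J(R)$ is). For the converse I would reverse the construction: given an $\alpha$-invariant decomposition $M=P\oplus Q$ with the two radical conditions, define $e\in E$ to be the projection onto $Q$ along $P$. Then $e^2=e$, and $\alpha$-invariance of both summands forces $e\alpha=\alpha e$; the same computation identifies $(\alpha-e)|_P=\alpha|_P$ and $(\alpha-e)|_Q=-(1_M-\alpha)|_Q$, so it remains to assemble $\alpha-e\in J^{\#}(E)$ from the two restrictions.

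The technical heart of both directions, and the step I expect to be the main obstacle, is the equivalence, for a block-diagonal element $d=\pi_P d\pi_P+\pi_Q d\pi_Q$ of $E$, between $d\in J^{\#}(E)$ and $\pi_P d\pi_P\in J^{\#}(\pi_P E\pi_P)$ together with $\pi_Q d\pi_Q\in J^{\#}(\pi_Q E\pi_Q)$, under the identifications $\pi_P E\pi_P\cong end(P)$ and $\pi_Q E\pi_Q\cong end(Q)$. I would prove this in two layers. First, for the ordinary radical: since $J(E)$ is a two-sided ideal, a block-diagonal $d$ lies in $J(E)$ iff each corner $\pi_P d\pi_P$ and $\pi_Q d\pi_Q$ does, and the corner identity $J(fEf)=fEf\cap J(E)$ for an idempotent $f$ then translates membership in $J(E)$ into membership in $J(end(P))$ and $J(end(Q))$. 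Second, I would handle the defining quantifier of $J^{\#}$: because $d$ is block-diagonal, $d^n=\pi_P d^n\pi_P+\pi_Q d^n\pi_Q$ for every $n$, so if $d^n\in J(E)$ the same $n$ works for each corner; conversely, if $(\pi_P d\pi_P)^m$ and $(\pi_Q d\pi_Q)^k$ lie in the respective radicals, then taking $n=\max(m,k)$ and using that $J$ is an ideal gives $d^n\in J(E)$. Applying this with $d=\alpha-e$ closes both implications.
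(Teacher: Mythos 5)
Your proof is correct and follows essentially the same route as the paper: the same idempotent/decomposition dictionary, the same identifications $\alpha|_P=(\alpha-e)|_P$ and $-(1_M-\alpha)|_Q=(\alpha-e)|_Q$, and the same passage between $J^{\#}(E)$ and the corner rings. The only (harmless) difference is presentational: you package the corner step via the standard identity $J(fEf)=fEf\cap J(E)$ together with $d^n=\pi_P d^n\pi_P+\pi_Q d^n\pi_Q$ for block-diagonal $d$, whereas the paper verifies the relevant quasi-regularity in $end(P)$ directly and handles the converse by orthogonality of the extensions by zero.
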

\begin{proof} $(1)\Rightarrow (2)$ Since $\alpha$ is strongly $J^{\#}$-clean in $E$, there exists an idempotent $\pi\in E$ and a $u\in J^{\#}(E)$
such that $\alpha=(1-\pi )+u$ and $\pi u=u\pi$. Thus,
$\pi\alpha=\pi u\in J^{\#}\big(\pi E\pi\big)$. Further, $1-\alpha
=\pi +(-u)$, and so $(1-\pi)(1-\alpha)=(1-\pi )(-u)\in J^{\#}\big(
(1-\pi)E(1-\pi)\big)$. Set $P=M\pi $ and $Q=M(1-\pi )$. Then
$M=P\oplus Q$. As $\alpha\pi=\pi\alpha$, we see that $P$ and $Q$
are $\alpha$-invariant. As $\alpha\pi\in J^{\#}\big(\pi
E\pi\big)$, we can find some $t\in {\Bbb N}$ such that
$(\alpha\pi)^t\in J\big(\pi E\pi\big)$. Let $\gamma\in end(P)$.
For any $x\in M$, it is easy to see that $(x)\pi \big(1_P-\gamma
\big(\alpha|_P\big)^t\big)=(x)\pi
\big(\pi-(\pi\overline{\gamma}\pi) (\pi\alpha\pi)^t\big)$ where
$\overline{\gamma}: M\to M$ given by
$(m)\overline{\gamma}=(m)\pi\gamma$ for any $m\in M$. Hence,
$1_P-\gamma \big(\alpha|_P\big)^t\in aut(P)$. Hence
$\big(\alpha|_P\big)^t\in J\big(end(P)\big)$. This implies that
$\alpha|_P\in J^{\#}\big(end(P)\big)$. Likewise, we verify that
$(1-\alpha)|_Q\in J^{\#}\big(end(Q)\big)$.

$(2)\Rightarrow (1)$ For any $\lambda\in end(Q)$, we construct an
$R$-homomorphism $\overline{\lambda}\in end(M)$ given by
$\big(p+q\big)\overline{\lambda}=(q)\lambda$. By hypothesis,
$\alpha|_P\in J^{\#}\big(end(P)\big)$ and $(1_M-\alpha)|_Q\in
J^{\#}\big(end(Q)\big)$. Thus, $\alpha
=\overline{1_{Q}}+\overline{\alpha
|_{P}}-\overline{(1_M-\alpha)|_{Q}}$. As $P$ and $Q$ are
$\alpha$-invariant, we see that
$\alpha\overline{1_{Q}}=\overline{1_{Q}}\alpha$. In addition,
$\overline{1_{Q}}\in end(M)$ is an idempotent. As
$\big(\overline{\alpha
|_{P}}\big)\big(\overline{(1_M-\alpha)|_{Q}}\big)=0=\big(\overline{(1_M-\alpha)|_{Q}}\big)\big(\overline{\alpha
|_{P}}\big)$, we show that $\overline{\alpha
|_{P}}-\overline{(1_M-\alpha)|_{Q}}\in J^{\#}\big(end(M)\big)$, as
required.
\end{proof}

\begin{lem} Let $R$ be a ring, and let $M$ be a left $R$-module. Suppose that $x,y,a,b\in end(M)$ such that $xa+yb=1_M, xy=yx=0, ay=ya$ and $xb=bx$.
Then $M=ker(x)\oplus ker(y)$ as left $R$-modules.
\end{lem}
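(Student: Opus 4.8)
The plan is to realize the two kernels as complementary summands by reading off natural ``projections'' from the given data, and then to verify surjectivity of their sum and triviality of their intersection. Throughout I follow the paper's convention that endomorphisms act on the right, so that for $\gamma\in end(M)$ one has $\ker(\gamma)=\{m\in M\mid (m)\gamma=0\}$, and composition $\gamma\delta$ means ``first $\gamma$, then $\delta$''.

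First I would check the two inclusions that make the hypotheses fit together: the endomorphism $yb$ carries $M$ into $\ker(x)$, and $xa$ carries $M$ into $\ker(y)$. For the first, compute $(yb)x=y(bx)=y(xb)=(yx)b=0$, using $xb=bx$ and then $yx=0$; hence $(m)(yb)\in\ker(x)$ for every $m\in M$. Symmetrically, $(xa)y=x(ay)=x(ya)=(xy)a=0$, using $ay=ya$ and then $xy=0$, so $(m)(xa)\in\ker(y)$ for every $m\in M$. The point that requires attention here is the pairing: it is $yb$ (not $xa$) that lands in $\ker(x)$, and this is exactly what forces the use of the ``cross'' commutation hypotheses $xb=bx$ and $ay=ya$, together with the vanishing of the mixed products $xy$ and $yx$; swapping the roles of $a$ and $b$ would break the cancellation.

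With these two facts established, the relation $xa+yb=1_M$ supplies the decomposition directly. For surjectivity of the sum, any $m\in M$ satisfies $m=(m)1_M=(m)(xa)+(m)(yb)$, whose first summand lies in $\ker(y)$ and whose second summand lies in $\ker(x)$; hence $M=\ker(x)+\ker(y)$. For directness, if $m\in\ker(x)\cap\ker(y)$ then $(m)x=(m)y=0$, so $m=(m)(xa+yb)=\big((m)x\big)a+\big((m)y\big)b=0$, giving $\ker(x)\cap\ker(y)=0$. Combining the two conclusions yields $M=\ker(x)\oplus\ker(y)$.

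I do not expect a genuine obstacle: this is a short computation rather than a structural argument. The only thing to remain vigilant about is bookkeeping, namely keeping the right-handed composition order consistent and matching each commutation relation to the correct one of the two inclusions, since interchanging the roles of $x,y$ or of $a,b$ destroys the needed cancellations.
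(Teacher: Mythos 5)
Your proof is correct, and it is the intended routine verification: the paper itself gives no argument here, recording only ``Straightforward (cf.\ [6, Lemma 3.2.6])''. Your computation — using $xb=bx$ and $yx=0$ to put the image of $yb$ inside $\ker(x)$, using $ay=ya$ and $xy=0$ symmetrically, and then reading both the sum and the directness off the identity $xa+yb=1_M$ — is exactly the argument being elided, with the right-action bookkeeping handled correctly.
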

\begin{proof} Straightforward. (cf. [6, Lemma 3.2.6]).
\end{proof}
\begin{lem} Let $R$ be a commutative ring, and let $\varphi\in M_n(R)$. Then the following are equivalent:
\begin{enumerate}
\item [(1)]{\it $\varphi\in J^{\#}\big(M_n(R)\big)$.}
\item [(2)]{\it $\chi (\varphi)\equiv t^n \big(mod~ J(R)\big)$, i.e., $\chi (\varphi)-t^n\in J(R)[t]$.}
\item [(3)]{\it There exists a monic polynomial $h\in R[t]$ such that $h\equiv t^{degh} \big(mod~J(R)\big)$ for which $h(\varphi)=0$.}
\end{enumerate}
\end{lem}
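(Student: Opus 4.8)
The plan is to transfer the whole statement to the semiprimitive quotient $\bar R:=R/J(R)$ and there invoke the classical relationship between nilpotency and characteristic polynomials. The one fact that makes this work is the standard identity $J\big(M_n(R)\big)=M_n\big(J(R)\big)$ for commutative $R$; reducing entrywise modulo $J(R)$ then gives a surjective ring homomorphism $M_n(R)\to M_n(\bar R)$ with kernel $J\big(M_n(R)\big)$, under which $\varphi\mapsto\bar\varphi$ and $\chi(\varphi)\mapsto\chi(\bar\varphi)$. In particular $\varphi\in J^{\#}\big(M_n(R)\big)$ holds exactly when some power $\bar\varphi^{\,m}=0$, i.e. when $\bar\varphi$ is nilpotent, and condition $(2)$ says precisely that $\chi(\bar\varphi)=t^n$ in $\bar R[t]$. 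I would prove the cycle $(2)\Rightarrow(3)\Rightarrow(1)\Rightarrow(2)$.

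For $(2)\Rightarrow(3)$ I would simply take $h:=\chi(\varphi)$, which is monic of degree $n$. The Cayley--Hamilton theorem (valid over any commutative ring) gives $h(\varphi)=0$, and $(2)$ is exactly the congruence $h\equiv t^{\deg h}\pmod{J(R)}$, so $(3)$ holds. For $(3)\Rightarrow(1)$ I would take the given monic $h$ with $h\equiv t^{d}\pmod{J(R)}$, $d=\deg h$, and reduce modulo $J(R)$: the relation $h(\varphi)=0$ becomes $\bar\varphi^{\,d}=0$, whence $\varphi^{\,d}\in M_n\big(J(R)\big)=J\big(M_n(R)\big)$ and therefore $\varphi\in J^{\#}\big(M_n(R)\big)$.

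The substance is in $(1)\Rightarrow(2)$, which by the reduction above amounts to the assertion that a nilpotent matrix over the reduced commutative ring $\bar R$ has characteristic polynomial $t^n$. (Here $\bar R$ is reduced because $\mathrm{Nil}(\bar R)\subseteq J(\bar R)=J\big(R/J(R)\big)=0$.) I would establish this by testing against every prime ideal $\mathfrak{p}$ of $\bar R$: the image of $\bar\varphi$ in $M_n(\bar R/\mathfrak{p})$ is still nilpotent, and $\bar R/\mathfrak{p}$ embeds in its fraction field $F$, over which the classical result gives that a nilpotent matrix has characteristic polynomial $t^n$. Since the characteristic polynomial is unchanged under these base changes, every non-leading coefficient of $\chi(\bar\varphi)$ maps to $0$ in $\bar R/\mathfrak{p}$, i.e. lies in $\mathfrak{p}$; intersecting over all primes puts these coefficients in $\mathrm{Nil}(\bar R)=0$, so $\chi(\bar\varphi)=t^n$, which is $(2)$.

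The delicate point — and the step I expect to need the most care — is precisely this passage from ``nilpotent modulo every prime'' to ``characteristic polynomial equals $t^n$'': the prime-by-prime argument is what converts the a priori non-canonical notion of nilpotency into the coefficient statement. As a reassuring check in the opposite direction, a matrix whose non-leading characteristic-polynomial coefficients are nilpotent is itself nilpotent, since those finitely many nilpotent elements generate a nilpotent ideal $I$ and Cayley--Hamilton forces a power of the matrix into $M_n(I)$; but for $(1)\Rightarrow(2)$ only the displayed direction is required.
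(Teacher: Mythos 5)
Your proof is correct, and it follows the same skeleton as the paper's: pass to $\bar R=R/J(R)$ via $J\big(M_n(R)\big)=M_n\big(J(R)\big)$, take $h=\chi(\varphi)$ with Cayley--Hamilton for $(2)\Rightarrow(3)$, and reduce everything to the statement that nilpotency of $\bar\varphi$ is equivalent to $\chi(\bar\varphi)=t^n$. The genuine difference is in how that last equivalence is handled. The paper outsources it (in both directions) to a cited result, [6, Proposition 3.5.4], which relates nilpotency of a matrix over a commutative ring to its characteristic polynomial modulo the nilradical; it then still has to convert ``coefficients in $J^{\#}(R)$'' into ``coefficients in $J(R)$'' by separately proving $J^{\#}(R)=J(R)$ for commutative rings. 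You instead make the argument self-contained: for $(3)\Rightarrow(1)$ you observe that the non-leading coefficients of $h$ lie in $J(R)$ on the nose, so $\bar h=t^{d}$ and $\bar\varphi^{\,d}=0$ immediately (no citation needed); and for $(1)\Rightarrow(2)$ you note $\bar R$ is reduced since $\mathrm{Nil}(\bar R)\subseteq J(\bar R)=0$, then test $\chi(\bar\varphi)$ against every prime of $\bar R$ by passing to the fraction field of $\bar R/\mathfrak{p}$, where the classical field case applies, and intersect over all primes. This prime-by-prime argument is exactly the content of the cited proposition, so what your route buys is a proof with no external dependencies and no detour through the identity $J^{\#}(R)=J(R)$ (which you absorb into the observation that the nilradical sits inside the Jacobson radical); what the paper's route buys is brevity at the cost of leaning on an unpublished thesis.
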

\begin{proof} $(1)\Rightarrow (2)$ Since $\varphi\in J^{\#}\big(M_n(R)\big)$, there exists some $m\in {\Bbb N}$ such that $\varphi^m\in
J\big(M_n(R)\big)$. As $J\big(M_n(R)\big)=M_n\big(J(R)\big)$, we
get $\overline{\varphi}\in N\big(M_n(R/J(R))\big)$. In view of [6,
Proposition 3.5.4], $\chi\big(\overline{\varphi}\big)\equiv
t^n\big(mod~N(R/J(R))\big)$. Write
$\chi(\varphi)=t^n+a_1t^{n-1}+\cdots +a_n$. Then
$\chi\big(\overline{\varphi}\big)=t^n+\overline{a_1}t^{n-1}+\cdots
+\overline{a_n}$. We infer that each $a_i^{m_i}+J(R)=0+J(R)$ where
$m_i\in {\Bbb N}$. This implies that $a_i\in J^{\#}(R)$. That is,
$\chi(\varphi)\equiv t^n \big(mod~ J^{\#}(R)\big)$. Obviously,
$J(R)\subseteq J^{\#}(R)$. For any $x\in J^{\#}(R)$, then there
exists some $m\in {\Bbb N}$ such that $x^n\in J(R)$. For any
maximal ideal $M$ of $R$, $M$ is prime, and so $x\in M$. This
implies that $x\in J(R)$; hence, $J^{\#}(R)\subseteq J(R)$.
Therefore $J^{\#}(R)=J(R)$, as required.

$(2)\Rightarrow (3)$ Choose $h=\chi (\varphi)$. Then $h\equiv
t^{degh} \big(mod~J(R)\big)$. In light of the Cayley-Hamilton
Theorem, $h(\varphi)=0$, as required.

$(3)\Rightarrow (1)$ By hypothesis, there exists a monic
polynomial $h\in R[t]$ such that $h\equiv t^{degh}
\big(mod~J(R)\big)$ for which $h(\varphi)=0$. Write
$h=t^n+a_1t^{n-1}+\cdots +a_n$. Choose
$\overline{h}=t^n+\overline{a_1}t^{n-1}+\cdots +\overline{a_n}\in
\big(R/J(R)\big)[t]$. Then $\overline{h}\equiv t^n \big( mod~
N(R/J(R))\big)$ for which
$\overline{h}\big(\overline{\varphi}\big) = 0$. According to [6,
Proposition 3.5.4], there exists some $m\in {\Bbb N}$ such that
$\big(\overline{\varphi}\big)^m=\overline{0}$ over $R/J(R)$.
Therefore $\varphi^m\in M_n\big(J(R)\big)$, and so $\varphi\in
J^{\#}\big(M_n(R)\big)$.
\end{proof}

\begin{df} For $r\in R$, define
$${\Bbb J}_r=\{ f\in R[t]~|~f~\mbox{monic, and}~f\equiv (t-r)^{^{degf}} \big(mod~ J^{{\#}}(R)\big)\}.$$
\end{df}
\begin{lem} Let $R$ be a projective-free ring, let $\varphi\in M_n(R)$, and let $h\in R[t]$ be a monic polynomial of degree $n$. If $h(\varphi)=0$ and
there exists a factorization $h=h_0h_1$ such that $h_0\in {\Bbb
J}_0$ and $h_1\in {\Bbb J}_1$, then $\varphi$ is strongly
$J^{\#}$-clean.
\end{lem}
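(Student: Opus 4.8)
The plan is to manufacture the module decomposition required by Lemma 2.1 and then extract the two $J^{\#}$-conditions from Lemma 2.3. Regard $M=R^n$ as a left $R$-module with $\varphi\in end(M)=M_n(R)$, write $d_0,d_1$ for the degrees of $h_0,h_1$, and set $x=h_0(\varphi)$ and $y=h_1(\varphi)$. Because all polynomials in $\varphi$ commute and $h=h_0h_1$, we obtain at once $xy=yx=h(\varphi)=0$. The substantive task is to produce $a,b\in end(M)$ with $xa+yb=1_M$, $ay=ya$ and $xb=bx$, so that Lemma 2.2 delivers $M=ker(x)\oplus ker(y)$.

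For this I would first invoke commutativity of $R$: as in the proof of Lemma 2.3 one has $J^{\#}(R)=J(R)$, so $h_0\equiv t^{d_0}$ and $h_1\equiv (t-1)^{d_1}$ modulo $J(R)$. Passing to $(R/J(R))[t]$ gives $\overline{h_0}=t^{d_0}$ and $\overline{h_1}=(t-1)^{d_1}$. Since $t-(t-1)=1$, the powers $t^{d_0}$ and $(t-1)^{d_1}$ admit a B\'ezout identity $p_0t^{d_0}+q_0(t-1)^{d_1}=1$ with $p_0,q_0$ of integer coefficients; reading it in $R[t]$ yields $p_0h_0+q_0h_1\equiv 1$ modulo $J(R)[t]$. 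Evaluating at $\varphi$ gives $p_0(\varphi)x+q_0(\varphi)y=1_M+r(\varphi)$ with $r\in J(R)[t]$; as $r(\varphi)\in M_n(J(R))=J\big(M_n(R)\big)$, the element $w=1_M+r(\varphi)$ is a unit commuting with $\varphi$. Taking $a=w^{-1}p_0(\varphi)$ and $b=w^{-1}q_0(\varphi)$ then gives $xa+yb=1_M$, while the commutations $ay=ya$ and $xb=bx$ hold because every factor commutes with $\varphi$ (and $w^{-1}$ commutes with each polynomial in $\varphi$).

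Lemma 2.2 now produces $M=P\oplus Q$ with $P=ker(x)$ and $Q=ker(y)$, both $\varphi$-invariant since $\varphi$ commutes with $x$ and $y$. As $P$ and $Q$ are finitely generated projective, being direct summands of the free module $M$, and $R$ is projective-free, each is free of finite rank; hence $end(P)$ and $end(Q)$ are full matrix rings over $R$, to which Lemma 2.3 applies.

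Finally I would feed each summand into Lemma 2.3. On $P$ we have $h_0(\varphi|_P)=0$ with $h_0$ monic and $h_0\equiv t^{d_0}$ modulo $J(R)$, so Lemma 2.3, $(3)\Rightarrow(1)$, gives $\varphi|_P\in J^{\#}\big(end(P)\big)$. On $Q$ I would translate: put $\eta=(1_M-\varphi)|_Q$ and $\tilde g(s)=(-1)^{d_1}h_1(1-s)$, a monic polynomial of degree $d_1$. Since $1_Q-\eta=\varphi|_Q$ we obtain $\tilde g(\eta)=(-1)^{d_1}h_1(\varphi|_Q)=0$, and from $h_1\equiv (t-1)^{d_1}$ we get $\tilde g\equiv s^{d_1}$ modulo $J(R)$; Lemma 2.3 then yields $(1_M-\varphi)|_Q\in J^{\#}\big(end(Q)\big)$. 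Both clauses of Lemma 2.1(2) now hold, so $\varphi$ is strongly $J^{\#}$-clean. I expect the middle step to be the main obstacle: turning the merely mod-$J(R)$ coprimality of $h_0$ and $h_1$ into the genuine identity $xa+yb=1_M$, which is precisely where the unit correction $w=1_M+r(\varphi)$ and the equality $J^{\#}(R)=J(R)$ do the real work.
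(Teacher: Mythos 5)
Your proof is correct and follows essentially the same route as the paper: obtain a B\'ezout identity $u_0h_0+u_1h_1=1$ from the coprimality of $t^{d_0}$ and $(t-1)^{d_1}$ modulo $J(R)$, apply Lemma 2.2 to split $R^n=\ker h_0(\varphi)\oplus\ker h_1(\varphi)$, and then use Lemma 2.3 on each ($\varphi$-invariant, free) summand before concluding via Lemma 2.1. The only difference is cosmetic: where the paper cites a lifting lemma from [6] to produce the identity over $R[t]$, you build it explicitly via an integer-coefficient identity plus the unit correction $w=1_M+r(\varphi)$, and you spell out the projective-free step that the paper leaves implicit.
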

\begin{proof} Suppose that $h=h_0h_1$ where $h_0\in {\Bbb J}_0$ and $h_1\in {\Bbb J}_1$. Write $h_0=t^p+a_1t^{p-1}+\cdots +a_p$
and $h_1=(t-1)^q+b_1t^{q-1}+\cdots +b_q$. Then each $a_i,b_j\in
J^{\#}(R)$. Since $R$ is commutative, we get each $a_i,b_j\in
J(R)$. Thus, $\overline{h_0}=t^p$ and
$\overline{h_1}=(t-\overline{1})^q$ in $\big(R/J(R)\big)[t]$.
Hence, $\big(\overline{h_0},\overline{h_1}\big)=\overline{1}$, In
virtue of [6, Lemma 3.5.10], we have some $u_0, u_1\in R[t]$ such
that $u_0h_0+u_1h_1=1$. Then
$u_0(\varphi)h_0(\varphi)+u_1(\varphi)h_1(\varphi)=1_{nR}$. By
hypothesis,
$h(\varphi)=h_0(\varphi)h_1(\varphi)=h_1(\varphi)h_0(\varphi)=0$.
Clearly, $u_0(\varphi)h_1(\varphi)=h_1(\varphi)u_0(\varphi)$ and
$h_0(\varphi)u_1(\varphi)=u_1(\varphi)h_0(\varphi)$. In light of
Lemma 2.2, $nR=ker\big(h_0(\varphi)\big)\oplus
ker\big(h_1(\varphi)\big)$. As $h_0t=th_0$ and $h_1t=th_1$, we see
that $h_0(\varphi)\varphi=\varphi h_0(\varphi)$ and
$h_1(\varphi)\varphi=\varphi h_1(\varphi)$, and so
$ker\big(h_0(\varphi)\big)$ and $ker\big(h_1(\varphi)\big)$ are
both $\varphi$-invariant. It is easy to verify that
$h_0\big(\varphi~|_{ker(h_0(\varphi))}\big)=0$. Since $h_0\in
{\Bbb J}_0$, we see that $h_0\equiv t^{degh_0}
\big(mod~J^{\#}(R)\big)$; hence, $\varphi~|_{ker(h_0(\varphi))}\in
J^{\#}\big(end(kerh_0(\varphi))\big)$.

It is easy to verify that
$h_1\big(\varphi~|_{ker(h_1(\varphi))}\big)=0$. Set
$g(u)=(-1)^{degh_1}h_1(1-u)$. Then
$g\big((~1-\varphi~)|_{ker(h_1(\varphi))}\big)=0$. Since $h_1\in
{\Bbb J}_1$, we see that $h_1\equiv
(t-1)^{degh_1}\big(mod~J^{\#}(R)\big)$. Hence, $g(u)\equiv
(-1)^{degh_1}(-u)^{degg}\big(mod~J(R)\big)$. This implies that
$g\in {\Bbb J}_0$. By virtue of Lemma 2.3,
$(1-\varphi)~|_{ker(h_1(\varphi)})\in
J^{\#}\big(end(ker(h_1(\varphi))\big))$. According to Lemma 2.1,
$\varphi\in M_n(R)$ is strongly $J^{\#}$-clean.
\end{proof}

The matrix
$$\varphi =\left(
\begin{array}{ccccc}
0&0&\cdots &0&-a_{0}\\
1&0&\cdots &0&-a_{1}\\
\vdots&\vdots&\ddots &\vdots&\vdots\\
0&0&\cdots &1&-a_{n-1}
\end{array}
\right)\in M_n(R)$$ is called the companion matrix $C_h$ of $h$,
where $h=t^n+a_{n-1}t^{n-1}+\cdots +a_1t+a_0\in R[t]$.

\begin{thm} Let $R$ be a projective-free ring and let $h\in R[t]$ be a monic polynomial of degree $n$. Then the following are equivalent:
\begin{enumerate}
\item [(1)]{\it Every $\varphi\in M_n(R)$ with $\chi (\varphi)=h$ is strongly $J^{\#}$-clean.}
\item [(2)]{\it The companion matrix $C_h$ of $h$ is strongly $J^{\#}$-clean.}
\item [(3)]{\it There exists a factorization $h=h_0h_1$ such that
$h_0\in {\Bbb J}_0$ and $h_1\in {\Bbb J}_1$.}
\end{enumerate}
\end{thm}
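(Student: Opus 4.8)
The plan is to establish the cycle $(1)\Rightarrow(2)\Rightarrow(3)\Rightarrow(1)$, exploiting the fact that the characteristic polynomial of the companion matrix $C_h$ equals $h$ itself. The implication $(1)\Rightarrow(2)$ is immediate: since $\chi(C_h)=h$ by the standard property of companion matrices, $C_h$ is one of the matrices $\varphi$ with $\chi(\varphi)=h$, and so strong $J^{\#}$-cleanness of all such $\varphi$ specializes to $C_h$. The implication $(3)\Rightarrow(1)$ is where I would invoke Lemma 2.5: given any $\varphi$ with $\chi(\varphi)=h$, the Cayley--Hamilton theorem yields $h(\varphi)=0$, and the hypothesized factorization $h=h_0h_1$ with $h_0\in{\Bbb J}_0$, $h_1\in{\Bbb J}_1$ then places us exactly in the situation of Lemma 2.5, which delivers strong $J^{\#}$-cleanness of $\varphi$.

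The substantive step, and the one I expect to be the main obstacle, is $(2)\Rightarrow(3)$: I must recover the factorization from the cleanness of the companion matrix alone. The approach is to apply Lemma 2.1 to $\alpha=C_h$ acting on $M=R^n={}^nR$, producing a decomposition ${}^nR=P\oplus Q$ into $C_h$-invariant submodules with $C_h|_P\in J^{\#}(\mathrm{end}(P))$ and $(1_M-C_h)|_Q\in J^{\#}(\mathrm{end}(Q))$. Here the projective-free hypothesis on $R$ is essential: the summands $P$ and $Q$, being finitely generated projective, are free, say of ranks $p$ and $q$ with $p+q=n$. I would then set $h_0$ to be the characteristic polynomial of $C_h|_P\in M_p(R)$ and $h_1$ the characteristic polynomial of $(C_h)|_Q\in M_q(R)$, both monic of the correct degrees.

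The remaining work is to verify that $h_0\in{\Bbb J}_0$, that $h_1\in{\Bbb J}_1$, and that $h=h_0h_1$. For membership, Lemma 2.3 applied on each free summand is the right tool: since $C_h|_P\in J^{\#}(\mathrm{end}(P))\cong J^{\#}(M_p(R))$, Lemma 2.3 gives $\chi(C_h|_P)\equiv t^p\ (\mathrm{mod}\ J(R))$, and as $R$ is commutative this is precisely $h_0\in{\Bbb J}_0$; symmetrically, $(1_M-C_h)|_Q\in J^{\#}(\mathrm{end}(Q))$ forces $\chi(C_h|_Q)\equiv(t-1)^q\ (\mathrm{mod}\ J(R))$, i.e. $h_1\in{\Bbb J}_1$, after translating by $1$ and applying the same lemma. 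The factorization $h=h_0h_1$ follows because the characteristic polynomial of $C_h$ on the direct sum $P\oplus Q$ is the product of the characteristic polynomials on the summands, using that $C_h$ restricts to each factor; this reduces to a block-triangular (indeed block-diagonal) determinant computation.

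The delicate points I would watch are, first, that the abstract endomorphism $C_h|_P$ must be identified with a genuine matrix in $M_p(R)$ so that Lemma 2.3 applies — this is exactly what freeness of $P$ supplies, via a choice of basis — and second, that the characteristic polynomial is independent of the basis chosen, so $h_0$ and $h_1$ are well defined. With these in hand the multiplicativity of $\chi$ over the invariant decomposition closes the argument, and I do not anticipate any further obstruction beyond the bookkeeping of ranks.
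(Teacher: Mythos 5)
Your proposal is correct and follows essentially the same route as the paper: $(1)\Rightarrow(2)$ via $\chi(C_h)=h$, $(2)\Rightarrow(3)$ by applying Lemma 2.1 to get an invariant decomposition ${}^nR=P\oplus Q$, using projective-freeness to identify the summands with free modules and their endomorphism rings with matrix rings, then Lemma 2.3 (with the substitution $t\mapsto 1-t$ on the second summand) together with multiplicativity of the characteristic polynomial over the direct sum, and $(3)\Rightarrow(1)$ by Cayley--Hamilton plus Lemma 2.5. The delicate points you flag (choice of basis, basis-independence of $\chi$) are exactly the ones the paper handles implicitly, so no further comment is needed.
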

\begin{proof} $(1)\Rightarrow (2)$ Write $h=t^n+a_{n-1}t^{n-1}+\cdots +a_1t+a_0\in R[t]$. Choose $$C_h=\left(
\begin{array}{ccccc}
0&0&\cdots &0&-a_{0}\\
1&0&\cdots &0&-a_{1}\\
\vdots&\vdots&\ddots &\vdots&\vdots\\
0&0&\cdots &1&-a_{n-1}
\end{array}
\right)\in M_n(R).$$ Then $\chi (C_h)=h$. By hypothesis, $C_h\in
M_n(R)$ is strongly $J^{\#}$-clean.

$(2)\Rightarrow (3)$ In view of Lemma 2.1, there exists a
decomposition $nR=A\oplus B$ such that $A$ and $B$ are
$\varphi$-invariant, $\varphi~|_{A}\in J^{\#}\big(end_R(A)\big)$
and $(1-\varphi)~|_{B}\in J^{\#}\big(end_R(B)\big)$. Since $R$ is
a projective-free ring, there exist $p,q\in {\Bbb N}$ such that
$A\cong pR$ and $B\cong qR$. Regarding $end_R(A)$ as $M_p(R)$, we
see that $\varphi~|_{A}\in J^{\#}\big(M_p(R)\big)$. By virtue of
Lemma 2.3, $\chi (\varphi~|_{A})\equiv t^p \big(mod~
J^{\#}(R)\big)$. Thus $\chi (\varphi~|_{A})\in {\Bbb J}_0$.
Analogously, $(1-\varphi)~|_{B}\in J^{\#}\big(M_q(R)\big)$. It
follows from Lemma 2.3 that $\chi
\big((1-\varphi)~|_{B}\big)\equiv
t^{q}~\big(~\mbox{mod}~J^{\#}(R)\big)$. This implies that
$det\big(\lambda I_q- (1-\varphi)~|_{B}\big)\equiv
\lambda^{q}~\big(~\mbox{mod}~J^{\#}(R)\big)$. Hence,
 $det\big((1-\lambda)I_q- \varphi~|_{B}\big)\equiv (-\lambda)^{q}~\big(~\mbox{mod}~J^{\#}(R)\big)$. Set $t=1-\lambda$. Then
 $det\big(tI_q- \varphi~|_{B}\big)\equiv (t-1)^{q}~\big(~\mbox{mod}~J^{\#}(R)\big)$. Therefore we get $\chi (\varphi~|_{B})
 \equiv (t-1)^{q}~\big(~\mbox{mod}~J^{\#}(R)\big)$. We infer that $\chi (\varphi~|_{B})\in {\Bbb J}_1$.
 Clearly, $\chi (\varphi)=\chi (\varphi~|_{A})\chi (\varphi~|_{B})$. Choose $h_0=\chi (\varphi~|_{A})$ and $h_1=\chi (\varphi~|_{B})$.
 Then there exists a factorization $h=h_0h_1$ such that $h_0\in {\Bbb J}_0$ and $h_1\in {\Bbb J}_1$, as desired.

$(3)\Rightarrow (1)$ For every $\varphi\in M_n(R)$ with $\chi
(\varphi)=h$, it follows by the Cayley-Hamilton Theorem that
$h(\varphi)=0$. Therefore $\varphi$ is strongly $J^{\#}$-clean by
Lemma 2.5.
\end{proof}

\begin{cor} Let $F$ be a field, and let $A\in M_n(F)$. Then the following are equivalent:
\begin{enumerate}
\item [(1)]{\it $A$ is the sum of an idempotent matrix and a nilpotent matrix that
commute.}
\item [(2)]{\it $\chi(A)=t^s(t-1)^t$ for some $s,t\geq 0$.}
\end{enumerate}
\end{cor}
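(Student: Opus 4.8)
The plan is to deduce this from Theorem 2.6 by specializing $R$ to a field $F$, which is a commutative local ring and hence projective-free. The first task is to record what each ingredient of the theory becomes over $F$. Since $J(F)=0$ and a field has no nonzero nilpotents, $J^{\#}(F)=\{x\in F\mid x^m\in J(F)=0~\mbox{for some}~m\in {\Bbb N}\}=\{0\}$. Consequently the congruence defining ${\Bbb J}_0$ collapses to an exact equality, so ${\Bbb J}_0=\{t^p\mid p\geq 0\}$, and likewise ${\Bbb J}_1=\{(t-1)^q\mid q\geq 0\}$. In the same way $J^{\#}\big(M_n(F)\big)$ is exactly the set of nilpotent matrices: $W\in J^{\#}\big(M_n(F)\big)$ means $W^m\in J\big(M_n(F)\big)=M_n\big(J(F)\big)=0$ for some $m$. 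Hence over $F$ the phrase ``strongly $J^{\#}$-clean'' is literally statement $(1)$, namely a sum $A=E+W$ of an idempotent matrix and a commuting nilpotent matrix.

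For $(2)\Rightarrow (1)$ I would argue through Theorem 2.6. Writing $h=\chi(A)=t^s(t-1)^t$, set $h_0=t^s$ and $h_1=(t-1)^t$; by the first paragraph $h_0\in {\Bbb J}_0$ and $h_1\in {\Bbb J}_1$, so $h=h_0h_1$ is a factorization of the kind appearing in Theorem 2.6$(3)$. The implication $(3)\Rightarrow (1)$ of that theorem then shows that every $\varphi\in M_n(F)$ with $\chi(\varphi)=h$ is strongly $J^{\#}$-clean, and since $\chi(A)=h$, the matrix $A$ is strongly $J^{\#}$-clean. Translating back via the dictionary of the first paragraph gives the desired decomposition.

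For $(1)\Rightarrow (2)$ the cleanest route is the field-level incarnation of Lemma 2.1 done by hand, since the statements of Theorem 2.6 are phrased for the companion matrix or for ``every $\varphi$'' and so cannot be quoted verbatim for a single given $A$. Starting from $A=E+W$ with $E^2=E$, $W$ nilpotent, and $EW=WE$, I would take the idempotent decomposition $F^n=\mathrm{im}(E)\oplus ker(E)$; because $W$ commutes with $E$ both summands are $W$-invariant and hence $A$-invariant. On $\mathrm{im}(E)$ the idempotent acts as the identity, so $A|=I+W|$ with $W|$ nilpotent and $\chi(A|_{\mathrm{im}(E)})=(t-1)^{q}$ with $q=\dim \mathrm{im}(E)$; on $ker(E)$ it acts as $0$, so $A|=W|$ is nilpotent and $\chi(A|_{ker(E)})=t^{p}$ with $p=\dim ker(E)$. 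Multiplying the characteristic polynomials of the restrictions yields $\chi(A)=t^{p}(t-1)^{q}$, which is exactly the required shape.

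I do not expect a deep obstacle here; the content is essentially a dictionary translation in which the congruences ``mod $J^{\#}(R)$'' sharpen to genuine equalities once $R=F$. The one point that warrants care is the direction $(1)\Rightarrow (2)$: one must not simply invoke Theorem 2.6, whose equivalent conditions speak of the companion matrix and of all matrices sharing a characteristic polynomial, but instead reprove the relevant splitting for the particular $A$ at hand, which is what the explicit idempotent decomposition above accomplishes.
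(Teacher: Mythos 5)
Your proposal is correct and follows essentially the same route as the paper: both reduce to Theorem 2.6 after observing that $J^{\#}(F)=0$ and that $J^{\#}\big(M_n(F)\big)$ is exactly the set of nilpotent matrices, so that ${\Bbb J}_0$ and ${\Bbb J}_1$ collapse to $\{t^p\}$ and $\{(t-1)^q\}$. The only divergence is in $(1)\Rightarrow(2)$, where the paper simply cites Theorem 2.6 while you re-derive the splitting $F^n=\mathrm{im}(E)\oplus \ker(E)$ by hand; this is a reasonable precaution given that the theorem's conditions are phrased for the companion matrix and for all matrices with a given characteristic polynomial, but it is not a genuinely different argument, since the paper's own proof of $(2)\Rightarrow(3)$ in Theorem 2.6 applies verbatim to any single strongly $J^{\#}$-clean matrix.
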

\begin{proof} As
$J\big(M_n(F)\big)=0$, we see that a $n\times n$ matrix contains
in $J^{\#}\big(M_n(F)\big)$ if and only if $A$ is a nilpotent
matrix. So $A\in M_n(F)$ is strongly $J^{\#}$-clean if and only if
$A$ is the sum of an idempotent matrix and a nilpotent matrix that
commute. By virtue of Theorem 2.6, we see that $A\in M_n(F)$ is
the sum of an idempotent matrix and a nilpotent matrix that
commute if and only if $\chi(A)=h_0h_1$, where $h_0\in {\Bbb J}_0$
and $h_1\in {\Bbb J}_1$. Clearly, $h_0\in {\Bbb J}_0$ if and only
if $h_0\equiv t^{degh_0}(mod~J^{\#}(F))$. But $J^{\#}(F)=0$, and
so $h_0=t^s$, where $s=degh_0$. Likewise, $h_1=(t-1)^t$, where
$t=degh_1$. Therefore we complete the proof.\end{proof}

For matrices over integers ,we have a similar situation. As
$J\big(M_n({\Bbb Z})\big)=0$, we see that an $n\times n$ matrix
contains in $J^{\#}\big(M_n({\Bbb Z})\big)$ if and only if it is a
nilpotent matrix. Likewise, we show that $A\in M_n({\Bbb Z})$ is
the sum of an idempotent matrix and a nilpotent matrix that
commute if and only if $\chi(A)=t^s(t-1)^t$ for some $s,t\geq 0$.
For instance, choose $A=\left(
\begin{array}{ccc}
-2&2&-1\\
-4&4&-2\\
-1&1&0
\end{array}
\right)\in M_3({\Bbb Z})$. Then $\chi(A)=t(t-1)^2$. Thus, $A$ is
the sum of an idempotent matrix and an nilpotent matrix that
commute. In fact, we have a corresponding factorization $A=\left(
\begin{array}{ccc}
-1&1&0\\
-2&2&0\\
0&0&1
\end{array}
\right)+\left(
\begin{array}{ccc}
-1&1&-1\\
-2&2&-2\\
-1&1&-1
\end{array}
\right)$.

\begin{cor} Let $R$ be a projective-free ring, and let $\varphi\in M_2(R)$. Then $\varphi$ is strongly $J^{\#}$-clean if and only if
\begin{enumerate}
\item [(1)]{\it $\chi (\varphi )\equiv t^2 \big(mod~ J(R)\big)$; or}
\vspace{-.5mm}
\item [(2)]{\it $\chi (\varphi )\equiv (t-1)^2 \big(mod~ J(R)\big)$; or}\vspace{-.5mm}
\item [(3)]{\it $\chi (\varphi )$ has a root in $J(R)$ and a root in $1+J(R)$.}\vspace{-.5mm}
\end{enumerate}
\end{cor}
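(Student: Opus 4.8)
The plan is to reduce everything to the factorization criterion of Theorem 2.6 specialized to degree $n=2$, and then to read off what the three possible degree splittings of the factorization mean. First I would set $h=\chi(\varphi)$, a monic polynomial of degree $2$, and record that $\varphi$ is strongly $J^{\#}$-clean if and only if $h$ admits a factorization $h=h_0h_1$ with $h_0\in {\Bbb J}_0$ and $h_1\in {\Bbb J}_1$. The forward implication is obtained exactly as in the step $(2)\Rightarrow(3)$ of Theorem 2.6: Lemma 2.1 yields a $\varphi$-invariant decomposition $2R=A\oplus B$ with $\varphi|_A\in J^{\#}(end_R(A))$ and $(1-\varphi)|_B\in J^{\#}(end_R(B))$, projective-freeness gives $A\cong pR$ and $B\cong qR$, and Lemma 2.3 identifies $\chi(\varphi|_A)\in {\Bbb J}_0$ and $\chi(\varphi|_B)\in {\Bbb J}_1$ with $\chi(\varphi)=\chi(\varphi|_A)\chi(\varphi|_B)$. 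The reverse implication is Lemma 2.5 together with the Cayley--Hamilton theorem. Since $R$ is commutative, $J^{\#}(R)=J(R)$ (as shown inside the proof of Lemma 2.3), so membership in ${\Bbb J}_0$ and ${\Bbb J}_1$ is congruence modulo $J(R)$.

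Next I would enumerate the degree splittings. Because $\deg h_0+\deg h_1=2$, only $(\deg h_0,\deg h_1)\in\{(2,0),(0,2),(1,1)\}$ can occur. In the case $(2,0)$ the only monic degree-$0$ factor is $h_1=1\in {\Bbb J}_1$, forcing $h=h_0\equiv t^2 \pmod{J(R)}$, which is condition $(1)$; symmetrically $(0,2)$ gives $h_0=1$ and $h\equiv (t-1)^2 \pmod{J(R)}$, which is condition $(2)$. In the case $(1,1)$ we may write $h_0=t-a$ with $a\in J(R)$ and $h_1=t-b$ with $b\in 1+J(R)$, so $h$ has a root $a\in J(R)$ and a root $b\in 1+J(R)$, which is condition $(3)$. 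This settles the direction ``strongly $J^{\#}$-clean $\Rightarrow (1)$ or $(2)$ or $(3)$''.

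For the converse I would run the three constructions backwards. Conditions $(1)$ and $(2)$ immediately supply the trivial factorizations $h=h\cdot 1$ and $h=1\cdot h$ lying in ${\Bbb J}_0\cdot{\Bbb J}_1$. The only point requiring care --- and the one I expect to be the main (minor) obstacle --- is condition $(3)$: from two roots one must manufacture an honest factorization into a linear ${\Bbb J}_0$-factor times a linear ${\Bbb J}_1$-factor. Here I would use the factor theorem: since $a\in J(R)$ is a root of the monic polynomial $h$, dividing by the monic $t-a$ gives $h=(t-a)(t-c)$ for some $c\in R$. Evaluating at the second root $b\in 1+J(R)$ yields $(b-a)(b-c)=0$; as $b-a\in 1+J(R)\subseteq U(R)$ is a unit, it follows that $c=b\in 1+J(R)$. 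Thus $h=(t-a)(t-b)$ with $t-a\in {\Bbb J}_0$ and $t-b\in {\Bbb J}_1$, and Lemma 2.5 (via Cayley--Hamilton) makes $\varphi$ strongly $J^{\#}$-clean. Invoking the factorization criterion once more completes the equivalence.
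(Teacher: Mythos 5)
Your proof is correct, and the forward direction is essentially the paper's argument: invoke Theorem 2.6 to get $\chi(\varphi)=h_0h_1$ with $h_0\in{\Bbb J}_0$, $h_1\in{\Bbb J}_1$, and enumerate the degree splittings $(2,0)$, $(1,1)$, $(0,2)$ to read off conditions $(1)$, $(3)$, $(2)$ respectively. Where you genuinely diverge is in the converse for condition $(3)$. The paper does not build a factorization there at all: it observes that $\varphi$ and $I_2-\varphi$ lie outside $J\big(M_2(R)\big)$ and then cites an external result ([4, Theorem 16.4.31]) asserting that a $2\times 2$ matrix whose characteristic polynomial has a root in $J(R)$ and a root in $1+J(R)$ is strongly $J$-clean, hence strongly $J^{\#}$-clean. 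You instead stay entirely inside the machinery of Section 2: from the root $a\in J(R)$ you factor $h=(t-a)(t-c)$ by the factor theorem (explicitly, $c=-a-p$ if $h=t^2+pt+q$), evaluate at the second root $b\in 1+J(R)$ to get $(b-a)(b-c)=0$, and use that $b-a\in 1+J(R)\subseteq U(R)$ to conclude $c=b$, giving $h=(t-a)(t-b)$ with $t-a\in{\Bbb J}_0$ and $t-b\in{\Bbb J}_1$; Lemma 2.5 with Cayley--Hamilton then finishes. Your route is self-contained and makes the corollary a pure consequence of Theorem 2.6, which is arguably cleaner; the paper's route outsources the hard case to a previously established strong $J$-cleanness criterion and in exchange writes less. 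Both are valid, and your unit computation $b-a\in U(R)$ is exactly the point that makes the two-roots hypothesis strong enough to produce an honest ${\Bbb J}_0\cdot{\Bbb J}_1$ factorization.
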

\begin{proof} Suppose that $\varphi$ is strongly $J^{\#}$-clean. By virtue of Theorem 2.6, there exists a factorization $\chi(\varphi )=h_0h_1$ such
that $h_0\in {\Bbb J}_0$ and $h_1\in {\Bbb J}_1$.

Case I. $deg(h_0)=2$ and $deg(h_1)=0$. Then $h_0=\chi(\varphi )=
t^2-tr(\varphi)t+det(\varphi)$ and $h_1=1$. As $h_0\in {\Bbb
J}_0$, it follows from Lemma 2.3 that $\varphi\in
J^{\#}\big(M_2(R)\big)$ or $\chi (\varphi )\equiv t^2 \big(mod~
J(R)\big)$.

Case II. $deg(h_0)=1$ and $deg(h_1)=1$. Then $h_0=t-\alpha$ and
$h_1=t-\beta$. Since $R$ is commutative, $J^{\#}(R)=J(R)$. As
$h_0\in {\Bbb J}_0$, we see that $h_0\equiv t (mod~ J(R))$, and
then $\alpha\in J(R)$. As $h_1\in {\Bbb J}_1$, we see that
$h_1\equiv t-1 (mod~ J(R))$, and then $\beta\in 1+J(R)$. Therefore
$\chi (\varphi )$ has a root in $J(R)$ and a root in $1+J(R)$.

Case III. $deg(h_0)=0$ and $deg(h_1)=2$. Then
$h_1(t)=det\big(tI_2-\varphi\big)\equiv (t-1)^2 (mod~ J(R))$. Set
$u=1-t$. Then $det\big(uI_2-(I_2-\varphi)\big)\equiv u^2 \big(mod~
J(R)\big)$. According to Lemma 2.3, $I_2-\varphi\in
J^{\#}\big(M_2(R)\big)$ or $\chi (\varphi )\equiv (t-1)^2
\big(mod~ J(R)\big)$.

We will suffice to show the converse. If $\chi (\varphi )\equiv
t^2 \big(mod~ J(R)\big)$ or $\chi (\varphi )\equiv (t-1)^2
\big(mod~ J(R)\big)$, then $\varphi\in J^{\#}\big(M_2(R)\big)$ or
$I_2-\varphi \in J^{\#}\big(M_2(R)\big)$. This implies that
$\varphi$ is strongly $J^{\#}$-clean. Otherwise, $\varphi,
I_2-\varphi\not\in J\big(M_2(R)\big)$. In addition, $\chi (\varphi
)$ has a root in $J(R)$ and a root in $1+J(R)$. According to [4,
Theorem 16.4.31], $\varphi$ is strongly $J$-clean, and therefore
it is strongly $J^{\#}$-clean.
\end{proof}

Choose $A=\left(
\begin{array}{cc}
\overline{0}&\overline{2}\\
\overline{1}&\overline{3}
\end{array}
\right)\in M_2\big({\Bbb Z}_4\big)$. It is easy to check that $A,
I_2-A\in M_2\big({\Bbb Z}_4\big)$ are not nilpotent. But
$\chi(A)=t^2+t+2$ has a root $\overline{2}\in J({\Bbb Z}_4)$ and a
root $\overline{1}\in 1+J({\Bbb Z}_4)$. As $J({\Bbb Z}_4)=\{
\overline{0},\overline{2}\}$ is nil, we know that every matrix in
$J^{\#}\big(M_2({\Bbb Z}_4)\big)$ is nilpotent. It follows from
Corollary 2.8 that $A$ is the sum of an idempotent matrix and a
nilpotent matrix that commute. Let ${\Bbb Z}_{(2)}=\{
\frac{m}{n}~|~m,n\in {\Bbb Z}, 2~\nmid~n\}$, and let $A=\left(
\begin{array}{cc}
1&1\\
\frac{2}{9}&0
\end{array}
\right)\in M_2({\Bbb Z}_{(2)})$. Then $J({\Bbb Z}_{(2)})=\{
\frac{2m}{n}~|~m,n\in {\Bbb Z}, 2~\nmid~n\}$. As
$\chi(A)=t^2-t+\frac{2}{9}$ has a root $\frac{1}{3}\in 1+J({\Bbb
Z}_{(2)})$ and a root $\frac{2}{3}\in J({\Bbb Z}_{(2)})$. In light
of Corollary 2.8, $A$ is strongly $J$-clean.

\begin{cor} Let $R$ be a projective-free ring, and let $f(t)=t^2+at+b\in R[t]$ be degree $2$ polynomial with $1+a\in J(R),
b\not\in J(R)$. Then the following are equivalent:
\begin{enumerate}
\item [(1)]{\it Every $\varphi\in M_2(R)$ with $\chi (\varphi )=f(t)$ is strongly $J^{\#}$-clean.}
\vspace{-.5mm}
\item [(2)]{\it There exist $r_1\in J(R)$ and $r_2\in 1+J(R)$ such that $f(r_i)=0$.}
\item [(3)]{\it There exists $r\in J(R)$ such that $f(r)=0$.}
\end{enumerate}
\end{cor}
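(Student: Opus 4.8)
The plan is to route everything through the single-matrix criterion of Corollary 2.8 together with elementary coset arithmetic modulo $J(R)$; recall that $R$ is commutative (being projective-free), so $J^{\#}(R)=J(R)$ throughout. The one observation that does all the work is that the hypothesis $1+a\in J(R)$ pins down the reduction $\overline{f}=t^2-t+\overline{b}$ in $(R/J(R))[t]$, since it forces $\overline{a}=-\overline{1}$.

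First I would establish $(1)\Leftrightarrow(2)$. Fix any $\varphi\in M_2(R)$ with $\chi(\varphi)=f$; at least one exists, namely the companion matrix $C_f$. By Corollary 2.8, such a $\varphi$ is strongly $J^{\#}$-clean iff $\overline{f}=t^2$, or $\overline{f}=(t-1)^2$, or $f$ has a root in $J(R)$ and a root in $1+J(R)$. The first two alternatives are impossible here: $\overline{f}=t^2$ forces $\overline{a}=\overline{0}$, while $\overline{f}=(t-1)^2$ forces $\overline{a}=-\overline{2}$, and either one combined with $\overline{a}=-\overline{1}$ yields $\overline{1}=\overline{0}$ in $R/J(R)$. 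Hence $\varphi$ is strongly $J^{\#}$-clean iff the third alternative holds, which is precisely statement $(2)$. Since this criterion depends only on $f$ and not on the chosen $\varphi$, the matrices with characteristic polynomial $f$ are either all strongly $J^{\#}$-clean or none are; this gives $(1)\Leftrightarrow(2)$.

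The implication $(2)\Rightarrow(3)$ is immediate (take $r=r_1$). For $(3)\Rightarrow(2)$ the key step is to manufacture the complementary root: given $r\in J(R)$ with $f(r)=0$, divide by the monic linear factor $t-r$, so by the remainder theorem $f=(t-r)(t-s)$ for a unique $s\in R$, and comparing coefficients gives $s=-a-r$. Now $1+a\in J(R)$ gives $-a\in 1+J(R)$, whence $s=-a-r\in 1+J(R)$ and $f(s)=0$; taking $r_1=r$ and $r_2=s$ yields statement $(2)$. This closes the cycle.

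The whole argument is routine once the coset bookkeeping is in place; the only steps needing care are the elimination of the two degenerate factorization types in $(1)\Leftrightarrow(2)$, where one must invoke $\overline{1}\neq\overline{0}$ to discard $\overline{f}=t^2$ and $\overline{f}=(t-1)^2$, and the verification in $(3)\Rightarrow(2)$ that the complementary root lands in $1+J(R)$. It is worth flagging that the hypothesis $b\notin J(R)$ does not drive the equivalence at all: since $f(r)=0$ with $r\in J(R)$ forces $b=-r(r+a)\in J(R)$, under $b\notin J(R)$ all three statements simply fail and the equivalence holds vacuously. The genuinely informative hypothesis is $1+a\in J(R)$, and the substantive content of the corollary is the case $b\in J(R)$, in which the roots in $J(R)$ and $1+J(R)$ can actually occur.
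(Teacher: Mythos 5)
Your proof is correct and takes essentially the same route as the paper: the forward direction rests on Corollary 2.8 in both cases, and your factorization $f=(t-r)\bigl(t-(-a-r)\bigr)$ used for $(3)\Rightarrow(2)$ is exactly the paper's $f=(t-r)(t+a+r)$, which it feeds into Theorem 2.6 to get $(3)\Rightarrow(1)$ directly. Your two side observations are also correct and worth recording: the degenerate alternatives of Corollary 2.8 really do need to be excluded using $1+a\in J(R)$ (the paper passes over this silently in its $(1)\Rightarrow(2)$ step), and the printed hypothesis $b\notin J(R)$ forces all three statements to fail (any root $r\in J(R)$ gives $b=-r(r+a)\in J(R)$), so the corollary as stated is vacuously true and that hypothesis is presumably a misprint for $b\in J(R)$, under which your argument and the paper's both go through unchanged.
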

\begin{proof} $(1)\Rightarrow (2)$ Since every $\varphi\in M_2(R)$ with $\chi (\varphi )=f(t)$ is strongly $J^{\#}$-clean, it follows by Corollary
2.8 that $f(t)=(t-r_1)(t-r_2)$ with $r_1\in J(R), r_2\in 1+J(R)$.

$(2)\Rightarrow (3)$ is trivial.

$(3)\Rightarrow (1)$ As $r^2+ar+b=0$, we see that
$f(t)=(t-r)(t+a+r)$. Clearly, $t-r\in {\Bbb J}_0$. As $1+a+r\in
J(R)$, we see that $t+a+r\in {\Bbb J}_1$. According to Theorem
2.6, we complete the proof.
\end{proof}

Let $\varphi$ be a $3\times 3$ matrix over a commutative ring $R$.
Set $mid(\varphi)=det(I_3-\varphi)-1+tr(\varphi)+det(\varphi).$

\begin{cor} Let $R$ be a projective-free ring, and let $\varphi\in M_3(R)$. Then $\varphi$ is strongly $J^{\#}$-clean if and
only if

{\rm (1)} $\chi (\varphi )\equiv t^3 \big(mod~ J(R)\big)$; or

{\rm (2)} $\chi (\varphi )\equiv (t-1)^3 \big(mod~ J(R)\big)$; or

{\rm (3)} $\chi(\varphi )$ has a root in $1+J(R)$,$tr(\varphi)\in
1+J(R)$,mid$(\varphi)\in J(R), det(\varphi)\in J(R)$;or

{\rm (4)} $\chi (\varphi )$ has a root in $J(R)$, $tr(\varphi)\in
2+J(R)$, mid$(\varphi)\in 1+J(R), det(\varphi)\in J(R)$.

\end{cor}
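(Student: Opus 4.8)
The plan is to specialize Theorem 2.6 to the case $n=3$ and translate the factorization condition $h=h_0h_1$ with $h_0\in{\Bbb J}_0$, $h_1\in{\Bbb J}_1$ into explicit congruences on the coefficients of $\chi(\varphi)$. Since $R$ is commutative, Lemma 2.3 gives $J^{\#}(R)=J(R)$, so membership in ${\Bbb J}_0$ (resp. ${\Bbb J}_1$) means reducing modulo $J(R)$ to a power of $t$ (resp. $t-1$). Writing $\chi(\varphi)=t^3-tr(\varphi)t^2+mid(\varphi)t-det(\varphi)$ after identifying the three elementary-symmetric coefficients with $tr$, $mid$, and $det$, I would run through the possible degree splittings of the factorization: $(3,0)$, $(0,3)$, $(2,1)$, and $(1,2)$.

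The degree $(3,0)$ case forces $\chi(\varphi)\equiv t^3\pmod{J(R)}$, giving condition (1), and the $(0,3)$ case forces $\chi(\varphi)\equiv(t-1)^3\pmod{J(R)}$, giving condition (2); these are immediate from the definitions of ${\Bbb J}_0$ and ${\Bbb J}_1$. The interesting cases are the mixed splittings. For a $(1,2)$ factorization $h_0=t-r$ with $r\in J(R)$ and $h_1\equiv(t-1)^2\pmod{J(R)}$, I would expand $h_0h_1$ and read off the coefficients modulo $J(R)$: the $t^2$-coefficient, which is $-tr(\varphi)$, must reduce to $-2$ (the sum of the root $r\equiv 0$ and the double root $\equiv 1$, twice), the $t$-coefficient $mid(\varphi)$ must reduce to $1$, and the constant $-det(\varphi)$ must reduce to $0$; this yields $tr(\varphi)\in 2+J(R)$, $mid(\varphi)\in 1+J(R)$, $det(\varphi)\in J(R)$, together with the existence of the root $r\in J(R)$, which is condition (4). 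Symmetrically, the $(2,1)$ case with the isolated root in $1+J(R)$ and the double root $\equiv 0$ gives $tr(\varphi)\in 1+J(R)$, $mid(\varphi)\in J(R)$, $det(\varphi)\in J(R)$, which is condition (3).

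For the converse I would reverse each computation: given the coefficient congruences in (3) or (4) together with the stated root, I reconstruct the factorization $h=h_0h_1$ with $h_0\in{\Bbb J}_0$ and $h_1\in{\Bbb J}_1$ and invoke Theorem 2.6, while (1) and (2) directly give $\varphi\in J^{\#}(M_3(R))$ or $I_3-\varphi\in J^{\#}(M_3(R))$ via Lemma 2.3 (using the substitution $u=1-t$ as in Corollary 2.8). The main obstacle I anticipate is bookkeeping: in the $(1,2)$ and $(2,1)$ cases I must confirm that a single root in $J(R)$ (resp. $1+J(R)$) plus the three coefficient congruences genuinely forces the \emph{quadratic} cofactor to lie in the correct class ${\Bbb J}_1$ (resp. ${\Bbb J}_0$). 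This requires showing that after factoring out the known linear factor, the residual quadratic automatically reduces to $(t-1)^2$ (resp. $t^2$) modulo $J(R)$ — which should follow because the coefficient congruences pin down the reduction of $\chi(\varphi)$ modulo $J(R)$ completely, and polynomial division by the known linear factor over the field $R/J(R)$ (or rather over each residue field, using that $R$ is commutative) determines the cofactor's reduction uniquely. Verifying that the definition of $mid(\varphi)$ indeed equals the middle elementary symmetric coefficient of $\chi(\varphi)$ is a short but essential preliminary that I would dispatch first.
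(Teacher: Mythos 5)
Your proposal is correct and follows essentially the same route as the paper: both reduce to Theorem 2.6, split into the four degree cases $(3,0)$, $(0,3)$, $(2,1)$, $(1,2)$, translate ${\Bbb J}_0$/${\Bbb J}_1$ membership into coefficient congruences via the elementary symmetric relations $a-\alpha=-tr(\varphi)$, $b-a\alpha=mid(\varphi)$, $-b\alpha=-det(\varphi)$, and reverse the computation for the converse. The cofactor issue you flag is handled in the paper exactly by these relations (using that the known root is a unit, resp.\ that $\alpha\in J(R)$), rather than by division over residue fields, but the substance is identical.
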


\begin{proof} Suppose that $\varphi$ is strongly $J^{\#}$-clean. By virtue of Theorem 2.6, there exists a factorization $\chi(\varphi )=h_0h_1$ such that $h_0\in {\Bbb J}_0$ and $h_1\in {\Bbb J}_1$.

Case I. $deg(h_0)=3$ and $deg(h_1)=0$. Then $h_0=\chi(\varphi )$
and $h_1=1$. As $h_0\in {\Bbb J}_0$, it follows from Lemma 2.3
that $\varphi\in J^{\#}\big(M_3(R)\big)$.

Case II. $deg(h_0)=0$ and $deg(h_1)=3$. Then
$h_1(t)=det\big(tI_3-\varphi\big)\equiv (t-1)^3 (mod~ J(R))$. Set
$u=1-t$. Then $det\big(uI_3-(I_3-\varphi)\big)\equiv u^3(mod~
J(R))$. According to Lemma 2.3, $I_3-\varphi\in
J^{\#}\big(M_3(R)\big)$.

Case III. $deg(h_0)=2$ and $deg(h_1)=1$. Then $h_0=t^2+at+b$ and
$h_1=t-\alpha$. As $h_0\in {\Bbb J}_0$, we see that $h_0\equiv t^2
(mod~ J(R))$; hence, $a,b\in J(R)$. As $h_1\in {\Bbb J}_1$, we see
that $h_1\equiv t-1 (mod~ J(R))$; hence, $\alpha\in 1+J(R)$. We
see that $a-\alpha=-tr(\varphi),b-a\alpha=mid(\varphi)$ and
$-b\alpha=-det(\varphi)$. Therefore $tr(\varphi)\in 1+J(R),
mid(\varphi)\in J(R)$ and $det(\varphi)\in J(R)$.

Case IV. $deg(h_0)=1$ and $deg(h_1)=2$. Then $h_0=t-\alpha$ and
$h_1=t^2+at+b$. As $h_0\in {\Bbb J}_0$, we see that $h_0\equiv t
(mod~ J(R))$; hence, $\alpha\in J(R)$. As $h_1\in {\Bbb J}_1$, we
see that $h_1\equiv (t-1)^2 (mod~ J(R))$, and then $a\in -2+J(R)$
and $b\in 1+J(R)$. Obviously,
$\chi(\varphi)=t^3-tr(\varphi)t^2+mid(\varphi)t-det(\varphi)$, and
so $a-\alpha=-tr(\varphi),b-a\alpha=mid(\varphi)$ and
$-b\alpha=-det(\varphi)$. Therefore $tr(\varphi)\in 2+J(R),
mid(\varphi)\in 1+J(R)$ and $det(\varphi)\in J(R)$.

Conversely, if $\chi (\varphi )\equiv t^3 \big(mod~ J(R)\big)$ or
$\chi (\varphi )\equiv (t-1)^3 \big(mod~ J(R)\big)$, then
$\varphi\in J^{\#}\big(M_3(R)\big)$ or $I_3-\varphi \in
J^{\#}\big(M_3(R)\big)$. Hence, $\varphi$ is strongly
$J^{\#}$-clean. Suppose $\chi (\varphi )$ has a root $\alpha\in
1+J(R)$ and $tr(\varphi)\in 1+J(R), det(\varphi)\in J(R)$. Then
$\chi (\varphi)=(t^2+at+b)(t-\alpha)$ for some $a,b\in R$. This
implies that $a-\alpha=-tr(\varphi), -b\alpha=-det(\varphi)$.
Hence, $a,b\in J(R)$. Let $h_0=t^2+at+b$ and $h_1=t-\alpha$. Then
$\chi(\varphi)=h_0h_1$ where $h_0\in {\Bbb J}_0$ and $h_1\in {\Bbb
J}_1$. According to Theorem 2.6, $\varphi$ is strongly
$J^{\#}$-clean.

Suppose $\chi (\varphi )$ has a root $\alpha\in J(R)$ and
$tr(\varphi)\in 2+J(R), mid(\varphi)\in 1+J(R)$ and
$det(\varphi)\in J(R)$. Then $\chi (\varphi)=(t-\alpha)(t^2+at+b)$
for some $a,b\in R$. This implies that $a-\alpha=-tr(\varphi),
b-a\alpha=mid(\varphi)$. Hence, $a\in -2+J(R)$,$b\in 1+J(R)$. Let
$h_0=t-\alpha$ and $h_1=t^2+at+b$. Then $\chi(\varphi)=h_0h_1$
where $h_0\in {\Bbb J}_0$ and $h_1\in {\Bbb J}_1$. According to
Theorem 2.6, $\varphi$ is strongly $J^{\#}$-clean, and we are
done.
\end{proof}

\section{Matrices Over Power Series Rings}

The purpose of this section is to extend the preceding discussion
to matrices over power series rings. We use $R[[x]]$ to stand for
the ring of all power series over $R$. Let
$A(x)=\big(a_{ij}(x)\big)\in M_n\big(R[[x]]\big)$. We use $A(0)$
to stand for $\big(a_{ij}(0)\big)\in M_n(R)$.

\begin{thm} Let $R$ be a projective-free ring, and let $A(x)\in M_2\big(R[[x]])$. Then the following are equivalent:
\begin{enumerate}
\item [(1)]{\it $A(x)\in M_2\big(R[[x]])$ is strongly $J^{\#}$-clean.}
\vspace{-.5mm}
\item [(2)]{\it $A(0)\in M_2(R)$ is strongly $J^{\#}$-clean.}\vspace{-.5mm}
\end{enumerate}
\end{thm}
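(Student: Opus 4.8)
The plan is to pass between the two rings through the evaluation homomorphism $\varepsilon\colon R[[x]]\to R$, $f\mapsto f(0)$, and to control characteristic polynomials via the identity $J\big(R[[x]]\big)=J(R)+xR[[x]]$, which yields $R[[x]]/J\big(R[[x]]\big)\cong R/J(R)$. Since $R$ is projective-free, I would first record that $R[[x]]$ is again projective-free: it has no nontrivial idempotents, because an idempotent power series reduces at $x=0$ to $0$ or $1$ and is then forced to equal $0$ or $1$ by $x$-adic iteration, and every finitely generated projective $R[[x]]$-module is free. This is exactly what lets me invoke Corollary 2.8 (and Theorem 2.6) over $R[[x]]$ as well as over $R$.

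For $(1)\Rightarrow(2)$ I would use that strong $J^{\#}$-cleanness passes through any surjective ring homomorphism. The map $\varepsilon$ induces a surjection $M_2\big(R[[x]]\big)\to M_2(R)$ sending $A(x)$ to $A(0)$. If $A(x)=E+W$ with $E$ idempotent, $W\in J^{\#}\big(M_2(R[[x]])\big)$ and $EW=WE$, then $A(0)=\overline{E}+\overline{W}$ with $\overline{E}$ idempotent and commuting with $\overline{W}$; and from $W^m\in J\big(M_2(R[[x]])\big)$ one gets $\overline{W}^m\in J\big(M_2(R)\big)$, since images of the Jacobson radical under a surjection lie in the Jacobson radical. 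Hence $\overline{W}\in J^{\#}\big(M_2(R)\big)$ and $A(0)$ is strongly $J^{\#}$-clean.

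For the substantive direction $(2)\Rightarrow(1)$ I would lift the factorization of the characteristic polynomial. By Corollary 2.8 over $R$, strong $J^{\#}$-cleanness of $A(0)$ places $\chi(A(0))$ in one of three cases: $\chi(A(0))\equiv t^2$ or $\equiv(t-1)^2$ modulo $J(R)$, or $\chi(A(0))$ has a root $r_1\in J(R)$ and a root $r_2\in 1+J(R)$. Writing $\chi(A(x))=t^2-\mathrm{tr}\big(A(x)\big)t+\det\big(A(x)\big)$, one has $\chi(A(x))\equiv\chi(A(0))$ modulo $xR[[x]][t]$, and the non-leading coefficients that must vanish live in $J\big(R[[x]]\big)=J(R)+xR[[x]]$; so the two congruence cases transfer verbatim to $R[[x]]$, whence $A(x)$ or $I_2-A(x)$ lies in $J^{\#}\big(M_2(R[[x]])\big)$ and $A(x)$ is strongly $J^{\#}$-clean by Corollary 2.8 over $R[[x]]$. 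The remaining case is the heart of the argument: I must lift the root $r_1$ to a root $\rho(x)\in R[[x]]$ of $\chi(A(x))$. I would construct $\rho(x)=r_1+\sum_{k\ge 1}c_kx^k$ by solving $\chi(A(x))\big(\rho(x)\big)=0$ coefficient by coefficient in $x$; at each order the coefficient of the unknown $c_k$ equals $2r_1-\mathrm{tr}\big(A(0)\big)=r_1-r_2$, which lies in $U(R)$ since $r_1\in J(R)$ and $r_2\in 1+J(R)$. This unit makes every step uniquely solvable, so $\rho(x)$ exists; then $\rho(0)=r_1$ forces $\rho(x)\in J(R[[x]])$, and the complementary root $\mathrm{tr}\big(A(x)\big)-\rho(x)$ reduces to $r_2$, hence lies in $1+J(R[[x]])$. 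Thus $\chi(A(x))$ has a root in $J(R[[x]])$ and a root in $1+J(R[[x]])$, and Corollary 2.8 over $R[[x]]$ finishes the proof.

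The main obstacle is twofold: confirming that $R[[x]]$ is projective-free, so that Corollary 2.8 is available over it, and carrying out the $x$-adic root lifting. The latter is precisely where the degree-two hypothesis helps, since the separation of the two roots — one in $J(R)$, one in $1+J(R)$ — makes the relevant Newton coefficient a unit and guarantees that the successive approximations close up; in the two congruence cases, where no such separation is present, no lifting is needed at all.
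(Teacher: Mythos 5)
Your proposal follows essentially the same route as the paper: evaluation at $x=0$ gives $(1)\Rightarrow(2)$, and for $(2)\Rightarrow(1)$ you establish that $R[[x]]$ is projective-free, run the trichotomy of Corollary 2.8, and in the root case lift $r_1\in J(R)$ $x$-adically, solving for the coefficients of $\rho(x)$ one order at a time using the unit $2r_1-\mathrm{tr}\big(A(0)\big)=r_1-r_2$. This is exactly the paper's Newton-type iteration (the paper's $(2b_0-\mu_0)b_k=\cdots$ with $b_0=\beta$ or $\alpha$), and your handling of the two congruence cases via $J\big(R[[x]]\big)=J(R)+xR[[x]]$ together with Lemma 2.3 is, if anything, more explicit than the paper's bare assertion that $A(0)\in J^{\#}\big(M_2(R)\big)$ implies $A(x)\in J^{\#}\big(M_2(R[[x]])\big)$.

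The one step whose justification does not hold up is your argument that $R[[x]]$ is projective-free. Having no nontrivial idempotents is far weaker than projective-freeness (a Dedekind domain with nontrivial class group is connected but not projective-free), so lifting idempotents along $x=0$ proves nothing about finitely generated projectives of higher rank. The paper closes this with Nakayama's lemma: for a finitely generated projective $R[[x]]$-module $P$, the reduction $P/(\ker\varepsilon)P$ is finitely generated projective over $R[[x]]/\ker\varepsilon\cong R$, hence free of some rank $m$; since $\ker\varepsilon\subseteq J\big(R[[x]]\big)$, a lift of a basis generates $P$, the resulting surjection $R[[x]]^m\to P$ splits by projectivity, and its complementary summand is killed by Nakayama, so $P\cong R[[x]]^m$. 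With that repair your proof is complete and coincides with the paper's.
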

\begin{proof} $(1)\Rightarrow (2)$ Since $A(x)$ is strongly $J^{\#}$-clean in $M_2\big(R[[x]]\big)$, there exists an $E(x)=E^2(x)\in
M_2\big(R[[x]]\big)$ and a $U(x)\in J^{\#}\big(M_2(R[[x]])\big)$
such that $A(x)=E(x)+U(x)$ and $E(x)U(x)=U(x)E(x)$. This implies
that $A(0)=E(0)+U(0)$ and $E(0)U(0)=U(0)E(0)$ where
$E(0)=E^2(0)\in M_2(R)$ and $U(0)\in J^{\#}\big(M_2(R)\big)$. As a
result, $A(0)$ is strongly $J^{\#}$-clean in $M_2(R)$.

$(2)\Rightarrow (1)$ Construct a ring morphism $\varphi: R[[x]]\to
R, f(x)\mapsto f(0)$. Then $R\cong R[[x]]/kerf$, where $kerf=\{
f(x)~|~f(0)=0\}\subseteq J\big(R[[x]]\big)$. For any finitely
generated projective $R[[x]]$-module $P$,
$P\bigotimes\limits_{R}\big(R[[x]]/kerf\big)$ is a finitely
generated projective $R[[x]]/kerf$-module; hence it is free. Write
$P\bigotimes\limits_{R}\big(R[[x]]/kerf\big)\cong
\big(R[[x]]/kerf\big)^m$ for some $m{\Bbb N}$. Then
$P\bigotimes\limits_{R}\big(R[[x]]/kerf\big)\cong
\big(R[[x]]\big)^m\bigotimes\limits_{R}\big(R[[x]]/kerf\big)$.
That is, $P/P\big(kerf\big)$ $\cong
\big(R[[x]]\big)^m/\big(R[[x]]\big)^m\big(kerf\big)$
wit$kerf\subseteq J\big(R[[x]]\big)$. By Nakayama Theorem, $P\cong
\big(R[[x]]\big)^m$ is free. Thus, $R[[x]]$ is projective-free.
Since $A(0)$ is strongly $J^{\#}$-clean in $M_2(R)$, it follows
from Corollary 2.8 that $A(0)\in J^{\#}\big(M_2(R)\big)$, or
$I_2-A(0)\in J^{\#}\big(M_2(R)\big)$, or the characteristic
polynomial $\chi\big(A(0)\big)=y^2+\mu y+\lambda$ has a root
$\alpha\in 1+J(R)$ and a root $\beta\in J(R)$. If $A(0)\in
J^{\#}\big(M_2(R)\big)$, then $A(x)\in
J^{\#}\big(M_2(R[[x]])\big)$. If $I_2-A(0)\in
J^{\#}\big(M_2(R)\big)$, then $I_2-A(x)\in
J^{\#}\big(M_2(R[[x]])\big)$. Otherwise, we write
$y=\sum\limits_{i=0}^{\infty}b_ix^i$  and $\chi(A(x))=y^2-\mu
(x)y-\lambda (x)$. Then $y^2=\sum\limits_{i=0}^{\infty}c_ix^i$
where $c_i=\sum\limits_{k=0}^{i}b_kb_{i-k}$. Let
$\mu(x)=\sum\limits_{i=0}^{\infty}\mu_ix^i,
\lambda(x)=\sum\limits_{i=0}^{\infty}\lambda_ix^i\in R[[x]]$ where
$\mu_0=\mu$ and $\lambda_0=\lambda$. Then, $y^2-\mu (x)y-\lambda
(x)=0$ holds in $R[[x]]$ if the following equations are satisfied:
$$\begin{array}{c}
b_0^2-b_0\mu_0-\lambda_0=0;\\
(b_0b_1+b_1b_0)-(b_0\mu_1+b_1\mu_0)-\lambda_1=0;\\
(b_0b_2+b_1^2+b_2b_0)-(b_0\mu_2+b_1\mu_1+b_2\mu_0)-\lambda_2=0;\\
\vdots
\end{array}$$ Obviously, $\mu_0=\alpha + \beta\in U(R)$ and $\alpha - \beta\in U(R)$ . Let $b_0=\alpha$. Since
$R$ is commutative, there exists some $b_1\in R$ such that
$$b_0b_1+b_1(b_0-\mu_0)=\lambda_1+b_0\mu_1.$$ Further, there exists some $b_2\in R$ such that
$$b_0b_2+b_2(b_0-\mu_0)=\lambda_2-b_1^2+b_0\mu_2+b_1\mu_1.$$ By
iteration of this process, we get $b_3,b_4,\cdots $. Then $y^2-\mu
(x)y -\lambda (x)=0$ has a root $y_0(x)\in 1+J\big(R[[x]]\big)$.
If $b_0=\beta\in J(R)$, analogously, we show that $y^2-\mu (x)y
-\lambda (x)=0$ has a root $y_1(x)\in J\big(R[[x]]\big)$. In light
of Corollary 2.8, the result follows.
\end{proof}

\begin{cor} Let $R$ be a projective-free ring, and let $A(x)\in M_2\big(R[[x]]/(x^{m})\big)$ $(m\geq 1)$. Then the following
are equivalent:
\begin{enumerate}
\item [(1)]{\it $A(x)\in M_2\big(R[[x]]/(x^m)\big)$ is strongly $J^{\#}$-clean.}
\vspace{-.5mm}
\item [(2)]{\it $A(0)\in M_2(R)$ is strongly $J^{\#}$-clean.}\vspace{-.5mm}
\end{enumerate}
\end{cor}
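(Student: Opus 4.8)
The plan is to run the argument of Theorem 3.1 almost verbatim, the main simplification being that the augmentation ideal is now nilpotent rather than merely contained in the radical. Write $S=R[[x]]/(x^m)$ and let $f\colon S\to R$ be the evaluation $g(x)\mapsto g(0)$. Its kernel $K=(x)/(x^m)$ satisfies $K^m=0$, so $K$ is nil and hence $K\subseteq J(S)$; consequently $J(S)$ is exactly the preimage $f^{-1}\big(J(R)\big)$, and in particular a root of a polynomial lying in $J(R)$ (resp.\ $1+J(R)$) pulls back to the analogous coset of $J(S)$. First I would record that $S$ is projective-free: for a finitely generated projective $S$-module $P$, the module $P/PK\cong P\otimes_S R$ is free over $R\cong S/K$, and since $K\subseteq J(S)$ the Nakayama argument used in Theorem 3.1 lifts a basis to show $P$ is free over $S$. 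Thus $S$ is a commutative projective-free ring and Corollary 2.8 is available over $S$.

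For $(1)\Rightarrow(2)$ I would simply apply $f$ entrywise, i.e.\ the induced map $M_2(S)\to M_2(R)$. If $A(x)=E(x)+U(x)$ with $E(x)$ idempotent, $U(x)\in J^{\#}\big(M_2(S)\big)$ and $E(x)U(x)=U(x)E(x)$, then $A(0)=E(0)+U(0)$ with $E(0)$ idempotent and $E(0)U(0)=U(0)E(0)$; and since $f$ is surjective it carries $J(S)$ into $J(R)$, hence $J^{\#}\big(M_2(S)\big)$ into $J^{\#}\big(M_2(R)\big)$, so $U(0)\in J^{\#}\big(M_2(R)\big)$. Thus $A(0)$ is strongly $J^{\#}$-clean.

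For $(2)\Rightarrow(1)$ I would feed $A(0)$ into Corollary 2.8 and lift each of the three resulting possibilities. If $A(0)\in J^{\#}\big(M_2(R)\big)$ or $I_2-A(0)\in J^{\#}\big(M_2(R)\big)$, the lift is immediate: since $J(S)=f^{-1}\big(J(R)\big)$, some power of $A(x)$ (resp.\ $I_2-A(x)$) maps into $M_2\big(J(R)\big)$ and therefore already lies in $M_2\big(J(S)\big)=J\big(M_2(S)\big)$, so $A(x)\in J^{\#}\big(M_2(S)\big)$ (resp.\ $I_2-A(x)\in J^{\#}\big(M_2(S)\big)$) and $A(x)$ is strongly $J^{\#}$-clean. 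The remaining case is that $\chi\big(A(0)\big)$ has a root $\alpha\in 1+J(R)$ and a root $\beta\in J(R)$, and here I would reproduce the recursive construction from Theorem 3.1 to lift these roots into $S$. Writing $\chi\big(A(x)\big)=y^2-\mu(x)y-\lambda(x)$ (so that $\mu_0=\mu(0)=\alpha+\beta$) and seeking a root $y(x)=\sum_{i=0}^{m-1}b_ix^i$ with $b_0=\alpha$, the coefficient equations are linear in each new $b_i$ with leading coefficient $2b_0-\mu_0=\alpha-\beta$, which is a unit because $\alpha\in 1+J(R)$ and $\beta\in J(R)$; solving them successively produces a root $y_0(x)\in 1+J(S)$, and taking $b_0=\beta$ produces a root $y_1(x)\in J(S)$. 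Since $x^m=0$ the recursion now stops after finitely many steps, so no convergence issue arises. Applying Corollary 2.8 over the projective-free ring $S$ to $\chi\big(A(x)\big)$, which now has a root in $J(S)$ and a root in $1+J(S)$, gives that $A(x)$ is strongly $J^{\#}$-clean.

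I expect the only real work to be the third-case root lifting: one must check that each successive linear equation is genuinely solvable, which rests entirely on $\alpha-\beta\in U(R)$ (equivalently $U(S)$), and that the resulting $y_0(x),y_1(x)$ land in $1+J(S)$ and $J(S)$ respectively. Everything else — projective-freeness of $S$, the identification $J(S)=f^{-1}\big(J(R)\big)$, and the two nilpotent cases — is routine once one observes that $K$ is nil.
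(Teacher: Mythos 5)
Your proof is correct, but it takes a different route from the paper's. The paper's own proof of this corollary is a two-line reduction: since $\psi\colon R[[x]]\to R[[x]]/(x^m)$ induces a surjection $\psi^*\colon M_2\big(R[[x]]\big)\to M_2\big(R[[x]]/(x^m)\big)$, one lifts $A(x)$ to some $B(x)\in M_2\big(R[[x]]\big)$ with $B(0)=A(0)$, applies Theorem 3.1 to conclude $B(x)$ is strongly $J^{\#}$-clean, and pushes the resulting decomposition back down through $\psi^*$ (using, implicitly, that a surjective ring homomorphism carries idempotents to idempotents and $J^{\#}$ into $J^{\#}$). You instead rebuild the entire Theorem 3.1 argument inside $S=R[[x]]/(x^m)$: verifying that $S$ is projective-free via $K=(x)/(x^m)$ being nilpotent and Nakayama, identifying $J(S)=f^{-1}\big(J(R)\big)$, and rerunning the root-lifting recursion, which now terminates after $m$ steps. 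Both arguments are sound. The paper's reduction is shorter and reuses Theorem 3.1 as a black box, at the cost of leaving the descent of strong $J^{\#}$-cleanness along $\psi^*$ unstated; your direct argument is longer but self-contained, makes the radical bookkeeping in $S$ explicit, and sidesteps any question about passing decompositions through the quotient map. The key computational point in both is the same: the linear coefficient $2b_0-\mu_0=\pm(\alpha-\beta)$ is a unit because $\alpha\in 1+J(R)$ and $\beta\in J(R)$.
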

\begin{proof} $(1)\Rightarrow (2)$ is obvious.

$(2)\Rightarrow (1)$ Let $\psi: R[[x]]\to R[[x]]/(x^m),
\psi(f)=\overline{f}$. Then it reduces a surjective ring
homomorphism $\psi^*: M_2\big(R[[x]]\big)\to
M_2\big(R[[x]]/(x^m)\big)$. Hence, we have a $B\in
M_2\big(R[[x]]\big)$ such that $\psi^*\big(B(x)\big)=A(x)$.
According to Theorem 3.1, we complete the proof.
\end{proof}

\begin{ex} {\rm Let $R={\Bbb Z}_4[x]/(x^2)$, and let $A(x)=\left( \begin{array}{cc}
\overline{2}&\overline{2}+\overline{2}x\\
\overline{2}+x&\overline{3}+\overline{3}x \end{array} \right)$
$\in M_2(R)$. Obviously, ${\Bbb Z}_4$ is a projective-free ring,
and that $R={\Bbb Z}_4[[x]]/(x^2)$. Since we have the strongly
$J^{\#}$-clean decomposition $A(0)=\left(
\begin{array}{cc}
\overline{0}&\overline{2}\\
\overline{2}&\overline{1} \end{array} \right)+\left(
\begin{array}{cc}
\overline{2}&\overline{0}\\
\overline{0}&\overline{2} \end{array} \right)$ in $M_2({\Bbb
Z}_4)$, it follows by Corollary 3.2 that $A(x)\in M_2(R)$ is
strongly $J^{\#}$-clean.}
\end{ex}

\begin{thm} Let $R$ be a projective-free ring, and let $A(x)\in M_3\big(R[[x]])$. Then the following are equivalent:
\begin{enumerate}
\item [(1)]{\it $A(x)\in M_3\big(R[[x]])$ is strongly $J^{\#}$-clean.}
\vspace{-.5mm}
\item [(2)]{\it $A(x)\in M_3\big(R[[x]]/(x^m)\big)(m\geq 1)$ is strongly $J^{\#}$-clean.}
\item [(3)]{\it $A(0)\in M_3(R)$ is strongly $J^{\#}$-clean.}
\vspace{-.5mm}
\end{enumerate}
\end{thm}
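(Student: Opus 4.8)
The plan is to prove the cycle $(1)\Rightarrow(2)\Rightarrow(3)\Rightarrow(1)$, where the first two implications are purely formal and the last one carries all the content. For $(1)\Rightarrow(2)$ I would apply the surjective ring homomorphism $\psi^*\colon M_3\big(R[[x]]\big)\to M_3\big(R[[x]]/(x^m)\big)$ induced by reduction modulo $(x^m)$: a decomposition $A(x)=E(x)+U(x)$ with $E(x)^2=E(x)$, $U(x)\in J^{\#}$ and $E(x)U(x)=U(x)E(x)$ maps to one of the same shape, since any surjective ring homomorphism carries idempotents to idempotents, sends $J^{\#}$ into $J^{\#}$ (it maps the Jacobson radical into the Jacobson radical), and preserves commutation. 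For $(2)\Rightarrow(3)$ I would apply the identical principle to the evaluation homomorphism $R[[x]]/(x^m)\to R$, $\overline{f}\mapsto f(0)$, under which $A(x)$ maps to $A(0)$. Both steps are then immediate, exactly as in the $(1)\Rightarrow(2)$ part of Theorem 3.1.

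The substance is $(3)\Rightarrow(1)$, which I would handle by mimicking the $(2)\Rightarrow(1)$ argument of Theorem 3.1 but invoking the $3\times 3$ criterion of Corollary 2.10 in place of Corollary 2.8. Recall that $R[[x]]$ is projective-free (established inside the proof of Theorem 3.1) and that $xR[[x]]\subseteq J\big(R[[x]]\big)$, so that reducing modulo $J\big(R[[x]]\big)$ agrees with first setting $x=0$ and then reducing modulo $J(R)$; in particular $g(x)\in J\big(R[[x]]\big)$ iff $g(0)\in J(R)$. Corollary 2.10 splits the hypothesis into four cases. In the two ``radical'' cases $\chi\big(A(0)\big)\equiv t^3$ or $\chi\big(A(0)\big)\equiv(t-1)^3\pmod{J(R)}$, every coefficient of $\chi\big(A(x)\big)$ has its constant term in $J(R)$, hence lies in $J\big(R[[x]]\big)$, so $\chi\big(A(x)\big)\equiv t^3$ (resp. $(t-1)^3$) $\pmod{J(R[[x]])}$; Lemma 2.3 then gives $A(x)\in J^{\#}\big(M_3(R[[x]])\big)$ (resp. $I_3-A(x)\in J^{\#}$), and $A(x)$ is strongly $J^{\#}$-clean.

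In the remaining two cases $\chi\big(A(0)\big)$ has a simple root $\alpha$, lying in $1+J(R)$ or in $J(R)$, that splits off a quadratic factor reducing to $t^2$ (resp. $(t-1)^2$) modulo $J(R)$. The core step is to lift $\alpha$ to a root $\alpha(x)=\sum_{i\geq0}c_ix^i\in R[[x]]$ of $\chi\big(A(x)\big)$ with $c_0=\alpha$, solving for the $c_i$ recursively exactly as the coefficients $b_i$ are produced in Theorem 3.1. What makes the recursion solvable is that the coefficient of $c_n$ in the equation coming from the $x^n$-term is the derivative $\chi\big(A(0)\big)'(\alpha)$, and since $\alpha$ is separated modulo $J(R)$ from the double root of the quadratic factor one computes $\chi\big(A(0)\big)'(\alpha)\equiv1\pmod{J(R)}$, a unit; hence each $c_n$ is uniquely determined. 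Once $\alpha(x)$ is found, note $\alpha(x)-\alpha\in xR[[x]]\subseteq J\big(R[[x]]\big)$, so $\alpha(x)$ lies in $1+J\big(R[[x]]\big)$ (resp. $J\big(R[[x]]\big)$), and the trace, mid, and determinant of $A(x)$ satisfy the required congruences modulo $J\big(R[[x]]\big)$ because they differ from those of $A(0)$ by elements of $xR[[x]]$. Applying Corollary 2.10 over the projective-free ring $R[[x]]$ then yields that $A(x)$ is strongly $J^{\#}$-clean.

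The main obstacle I anticipate is precisely this root-lifting: confirming that $\chi\big(A(0)\big)'(\alpha)$ is a unit in each of the two cases, so that the Hensel-type recursion can be carried out at every stage, and keeping the bookkeeping that the lifted root remains in the correct coset $1+J\big(R[[x]]\big)$ or $J\big(R[[x]]\big)$. Everything else is transport of structure along ring homomorphisms together with the already-proved Corollary 2.10 and Lemma 2.3.
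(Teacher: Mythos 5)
Your proposal is correct and follows essentially the same route as the paper: the two formal implications, then Corollary 2.10 over the projective-free ring $R[[x]]$ combined with a Hensel-type recursive lifting of the root, where the solvability of each step comes from $3b_0^2-2\mu_0 b_0-\lambda_0$ (i.e.\ $\chi\big(A(0)\big)'(\alpha)$) being a unit. The only cosmetic difference is that the paper disposes of the ``root in $1+J(R)$'' case by passing to $B(x)=I_3-A(x)$ and reusing the first computation, whereas you verify the derivative is a unit directly in both cases.
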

\begin{proof} $(1)\Rightarrow (2)$ and $(2)\Rightarrow (3)$ are clear.

$(3)\Rightarrow (1)$ As $A(0)$ is strongly $J^{\#}$-clean in
$M_3(R)$, it follows from Corollary 2.10 that $A(0)\in
J^{\#}\big(M_3(R)\big)$, or $I_3-A(0)\in J^{\#}\big(M_3(R)\big)$,
or $\chi\big(A(0)\big)$ has a root in $J(R)$ and
$tr\big(A(0)\big)\in 2+J(R), mid\big(A(0)\big)\in 1+J(R),
det\big(A(0)\big)\in J(R)$, or $\chi\big(A(0)\big)$ has a root in
$1+J(R)$ and $tr\big(A(0)\big)\in 1+J(R), mid\big(A(0)\big)\in
J(R), det\big(A(0)\big)\in J(R)$. If $A(0)\in
J^{\#}\big(M_3(R)\big)$ or $I_3-A(0)\in J^{\#}\big(M_3(R)\big)$,
then $A(x)\in J^{\#}\big(M_3(R[[x]])\big)$ or $I_3-A(x)\in
J^{\#}\big(M_3(R[[x]])\big)$. Hence, $A(x)\in M_3\big(R[[x]]\big)$
is strongly $J^{\#}$-clean. Assume that
$\chi\big(A(0)\big)=t^3-\mu t^2-\lambda t-\gamma $ has a root
$\alpha\in J(R)$ and $tr\big(A(0)\big)\in
2+J(R),mid\big(A(0)\big)\in 1+J(R),det\big(A(0)\big)\in J(R)$.
Write $y=\sum\limits_{i=0}^{\infty}b_ix^i$. Then
$y^2=\sum\limits_{i=0}^{\infty}c_ix^i$ where
$c_i=\sum\limits_{k=0}^{i}b_kb_{i-k}$. Further,
$y^3=\sum\limits_{i=0}^{\infty}d_ix^i$ where
$d_i=\sum\limits_{k=0}^{i}b_kc_{i-k}$. Let
$\mu(x)=\sum\limits_{i=0}^{\infty}\mu_ix^i,
\lambda(x)=\sum\limits_{i=0}^{\infty}\lambda_ix^i,
\gamma(x)=\sum\limits_{i=0}^{\infty}\gamma_ix^i\in R[[x]]$ where
$\mu_0=\mu,\lambda_0=\lambda$ and $\gamma_0=\gamma$. Then,
$y^3-\mu (x)y^2-\lambda (x)y-\gamma(x)=0$ holds in $R[[x]]$ if the
following equations are satisfied:
$$\begin{array}{c}
b_0^3-b_0^2\mu_0-b_0\lambda_0-\gamma_0=0;\\
(3b_0^2-2b_0\mu_0-\lambda_0)b_1=\gamma_1+b_0^2\mu_1+b_0\lambda_1;\\
(3b_0^2-2b_0\mu_0-\lambda_0)b_2=\gamma_2+b_0^2\mu_2+b_1^2\mu_0+2b_0b_1\mu_1+b_0\lambda_2+b_1\lambda_0-3b_0b_1^2;\\
\vdots
\end{array}$$ Let $b_0=\alpha\in J(R)$. Obviously, $\mu_0=trA(0)\in 2+J(R)$ and $\lambda_0=-midA(0)\in U(R)$.
Hence, $3b_0^2-2b_0\mu_0-\lambda_0\in U(R)$. Thus, we see that
$b_1=(3b_0^2-2b_0\mu_0-\lambda_0)^{-1}(\gamma_1+b_0^2\mu_1+b_0\lambda_1)$
and
$b_2=(3b_0^2-2b_0\mu_0-\lambda_0)^{-1}(\gamma_2+b_0^2\mu_2+b_1^2\mu_0+2b_0b_1\mu_1+b_0\lambda_2+b_1\lambda_0-3b_0b_1^2)$.
By iteration of this process, we get $b_3,b_4,\cdots $. Then
$y^3-\mu (x)y^2 -\lambda (x)y-\gamma(x)=0$ has a root $y_0(x)\in
J\big(R[[x]]\big)$. It follows from $trA(0)\in 2+J(R)$ that
$trA(x)\in 2+J\big(R[[x]]\big)$. Likewise, $midA(x)\in
1+J\big(R[[x]]\big)$. According to Corollary 2.10, $A(x)\in
M_3\big(R[[x]])$ is strongly $J^{\#}$-clean.

Assume that $\chi\big(A(0)\big)$ has a root $1+\alpha\in J(R)$ and
$tr\big(A(0)\big)\in 1+J(R), mid\big(A(0)\big)\in J(R), detA(0)\in
J(R)$. Then $det\big(I_3-A(0)\big)=1-trA(0)+midA(0)-detA(0)\in
J(R)$. Set $B(x)=I_3-A(x)$. Then $\chi\big(B(0)\big)$ has a root
$\alpha\in J(R)$ and $tr\big(B(0)\big)\in 2+J(R), detB(0)\in
J(R)$. This implies that $midB(0)=detA(0)-1+trB(0)+detB(0)\in
1+J(R)$. By the preceding discussion, we see that $B(x)\in
M_3\big(R[[x]])$ is strongly $J^{\#}$-clean, and then we are done.
\end{proof}

From this evidence above, we end this paper by asking the
following question: Let $R$ be a projective-free ring, and let
$A(x)\in M_n\big(R[[x]]) (n\geq 4) $. Do the strongly
$J^{\#}$-cleanness of $A(x)\in M_3\big(R[[x]])$ and $A(0)\in
M_3(R)$ coincide with each other?\\

{\bf\large Acknowledgements}\ \ This research was supported by the
Scientific and Technological Research Council of Turkey (2221
Visiting Scientists Fellowship Programme) and the Natural Science
Foundation of Zhejiang Province (Y6090404).

\end{document}